\documentclass[12pt]{amsart}
\usepackage{amssymb,a4wide}
\usepackage[usenames]{color}

\newcommand{\D}{{\mathbb D}}
\newcommand{\C}{{\mathbb C}}

\newtheorem{Theorem}{Theorem}
\newtheorem{Proposition}{Proposition}
\newtheorem{Lemma}{Lemma}

\title[Spectral properties of the canonical solution to $\bar\partial$]
{Some spectral properties of the canonical solution operator to $\bar\partial$
on weighted Fock spaces}

\author{Olivia Constantin and Joaquim  Ortega-Cerd\`a}
\date{\today}
\thanks{The second author is supported by the project MTM2008-05561-C02-01 and
the CIRIT grant 2005SGR00611}

\address{ Olivia Constantin,
Faculty of Mathematics,
University of Vienna,
Norbergstrasse 15, 
1090 Vienna, 
Austria} \email {olivia.constantin@univie.ac.at}

\address{Joaquim  Ortega-Cerd\`a, 
Departament de Matem\`atica Aplicada i An\`alisi, 
Universitat de Bar\-ce\-lo\-na, 
Gran Via 585, 08007 
Barcelona, Spain}\email{jortega@ub.edu}

\begin{document}
\maketitle

\begin{abstract}
We characterize the Schatten class membership of the canonical solution operator
to $\bar\partial$ acting on $L^2(e^{-2\phi})$, where $\phi$ is a subharmonic function
with $\Delta\phi$ a doubling measure.  The obtained characterization is in terms 
of $\Delta\phi$. As part of our approach, we study Hankel operators with anti-analytic
symbols acting on the corresponding Fock space of entire functions in $L^2(e^{-2\phi})$.
\end{abstract}

{\it Keywords:} Schatten classes, canonical solution operator to $\bar\partial$

\section{Introduction}

For a (nonharmonic) subharmonic function $\phi$ on $\C$ having the property that
$\Delta\phi$ is a doubling measure, the generalized Fock space ${\mathcal
F}_\phi^2$ is defined by 
$$
{\mathcal F}_\phi^2=\{f\in{\mathcal H}(\C): \|f\|^2_{{\mathcal
F}_\phi^2}=\int_\C |f(z)|^2 e^{-2\phi(z)}\, dm(z)<\infty\},
$$
where $dm(z)$ denotes the Lebesgue measure on $\C$.  We let $\mu=\Delta\phi$ and
denote by  $\rho(z)$ the positive radius for which we have
$\mu(D(z,\rho(z)))=1,\, z\in\C$. The function $\rho^{-2}$ can be regarded as a
regularized version of $\Delta \phi$ (see \cite{christ, mmoc}). We consider the
canonical solution operator $N$ to $\bar \partial$  given by
$$\bar\partial Nf=f
\hbox{\ \ \ \  and\ \ \ \ } Nf \hbox{ is of minimal norm in }L^2(e^{-2\phi}),
$$ 
or, equivalently
$$
\bar\partial Nf=f \ \  \hbox{ and }\ \ Nf\perp \mathcal{F}^2_\phi.
$$
The boundedness and the compactness of $N$ acting on various weighted
$L^2$-spaces have been extensively studied in one or several variables (see
\cite{fustraube,haslinger1,haslhelffer,haslingerlamel}). Concerning the Schatten
class membership of this operator, it was first shown in \cite{haslinger1} that
for the particular choice $\phi(z)=|z|^m$, $N$ fails to be Hilbert-Schmidt, and
a more involved study was pursued in \cite{haslingerlamel} in the context of
several complex variables, where the authors obtain necessary and sufficient
conditions for the canonical solution operator to $\bar\partial$ to belong to
the Schatten class $\mathcal{S}^p,\,p>0$, when restricted to $(0,1)$-forms with
holomorphic coefficients in $L^2(\mu)$, for measures $\mu$ with the property
that the monomials form an orthogonal family in $L^2(\mu)$. Some particular
cases of these results were previously obtained in \cite{loverayoussfi}.  

In the present paper we are interested in the setting of subharmonic functions
$\phi$ with $\Delta \phi$ a doubling measure. For this type of weights, it was
proven in \cite{jj} that $N$ is compact from $L^2(e^{-2\phi})$ to itself if and
only if $\rho(z)\rightarrow 0$ as $|z|\rightarrow\infty$. We continue the
investigation in \cite{jj} by characterizing the Schatten class membership of
$N$.  We find that $N$ fails to be Hilbert-Schmidt and that $N$ belongs to the
Schatten class $\mathcal{S}^p$  with $p>2$ if and only if the following holds
\begin{equation}\label{introcrit}
\int_\C \rho^{p-2}(z)\, dA(z)<\infty.
\end{equation}
We start our approach by noticing that the restriction of $N$ to
$\mathcal{F}^2_\phi$ is actually a (big) Hankel operator with symbol $\bar z$.
This observation leads us to a study of these properties for Hankel operators on
$\mathcal{F}^2_\phi$ with anti-analytic symbols. We would like to point out that
Lin and Rochberg \cite{linrochberg, linrochberg1} considered these problems for
Hankel operators with symbols in $L^2(\C)$ for a certain class of subharmonic
functions $\phi$.  The case of anti-analytic symbols  was investigated in
\cite{schneiderknirsch,youssfi, schneider} for $\phi(z)=|z|^m,\, m>0$, and it
was shown that a Hankel operator $H_{\bar g}$ belongs to $\mathcal{S}^p$ if and
only if the symbol $g$ is a polynomial of degree smaller than $m(p-2)/(2p)$. For
subharmonic functions $\phi$ with $\Delta \phi$ a doubling measure, we find that
$H_{\bar g}$ fails to be Hilbert-Schmidt unless $g$ is constant, and $H_{\bar
g}\in\mathcal{S}^p$ for $p>2$, if and only if its symbol satisfies
$$
\int_\C |g'(z)|^p\rho^{p-2}(z)\, dA(z)<\infty,
$$
that is, $g$ is a polynomial whose degree depends on the order of decay of
$\rho$.

Finally, using a result by Russo \cite{russo}  together with the pointwise
estimates obtained in \cite{jj} for the kernel of the canonical solution
operator $N$, we show that the condition (\ref{introcrit}) is actually
sufficient for $N$ to belong to $\mathcal{S}^p$ with $p>2$, even when defined on
the whole of $L^2(e^{-2\phi})$.

\section{Preliminaries}
In this section we gather a few definitions and some  known estimates that will
be used in our further considerations. We start with some facts about doubling
measures. A nonnegative Borel measure $\mu$ is called {\it doubling} if there
exists $C>0$ such that
$$
\mu(D(z,2r))\le C\mu(D(z,r)),
$$
for all $z\in\C$ and $r>0$. The smallest constant in the previous inequality is
called the doubling constant for $\mu$. 

\begin{Lemma}\label{lc}(\cite[Lemma 2.1]{christ}) Let $\mu$ be a doubling
measure on $\C$.  There exists a constant $\gamma>0$ such that for any discs $D,
D'$ with respective radius $r>r'$ and with $D\cap D' \neq \emptyset$  the
following holds
$$
\Bigl( \frac{\mu(D)}{\mu(D')}\Bigr)^\gamma \lesssim \frac{r}{r'}\lesssim
\Bigl(\frac{\mu(D)}{\mu(D')}\Bigr)^{1/\gamma}. 
$$
\end{Lemma}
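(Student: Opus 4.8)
\smallskip
\noindent\emph{Proof proposal.} The plan is to reduce the assertion to a two–sided polynomial comparison between $\mu\bigl(D(z,R)\bigr)/\mu\bigl(D(z,r)\bigr)$ and $R/r$ for \emph{concentric} discs, from which the general case follows by comparing an arbitrary pair of intersecting discs with suitable concentric ones. Throughout write $D=D(w,r)$ and $D'=D(w',r')$ with $r>r'$ and $D\cap D'\neq\emptyset$, so that $|w-w'|<r+r'<2r$.

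First I would record the elementary upper estimate obtained by iterating the doubling inequality, $\mu(D(z,2^{k}r))\le C^{k}\mu(D(z,r))$, which gives, for concentric discs $D(z,r)\subset D(z,R)$,
$$
\mu(D(z,R))\;\lesssim\;\Bigl(\tfrac{R}{r}\Bigr)^{\alpha}\,\mu(D(z,r)),\qquad \alpha:=\log_{2}C.
$$
For the passage to concentric discs, note that $|w-w'|<2r$ forces $D(w',r)\subset D(w,3r)$ and $D(w,r)\subset D(w',3r)$, and since $3r<2^{2}r$ doubling yields $\mu(D)\asymp\mu(D(w',r))$, with constants depending only on $C$. Hence it is enough to compare the concentric discs $D(w',r')\subset D(w',r)$.

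The heart of the matter is the matching lower estimate, i.e.\ a \emph{reverse doubling} inequality, which encodes the fact that a doubling measure on $\C$ cannot concentrate too much because $\C$ is connected. I would argue that the annulus $D(z,2r)\setminus D(z,r)$ contains a disc $D(z',r/2)$ with $|z-z'|=\tfrac{3r}{2}$; since $D(z,4r)\subset D(z',6r)\subset D(z',8r)=D(z',2^{4}\cdot\tfrac{r}{2})$, doubling gives $\mu(D(z',r/2))\gtrsim\mu(D(z,4r))\ge\mu(D(z,2r))$. As $D(z',r/2)$ and $D(z,r)$ are disjoint subsets of $D(z,2r)$, this yields $\mu(D(z,2r))\ge\mu(D(z,r))+c\,\mu(D(z,2r))$ for some $c=c(C)\in(0,1)$ (here one uses $C>1$, which holds since $\mu=\Delta\phi\not\equiv0$), hence $\mu(D(z,2r))\ge c_{0}\,\mu(D(z,r))$ with $c_{0}:=(1-c)^{-1}>1$. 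Iterating as before gives, for concentric discs,
$$
\mu(D(z,R))\;\gtrsim\;\Bigl(\tfrac{R}{r}\Bigr)^{\beta}\,\mu(D(z,r)),\qquad \beta:=\log_{2}c_{0}>0.
$$

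Finally I would assemble the pieces: applying the two concentric estimates to $D(w',r')\subset D(w',r)$ and using $\mu(D)\asymp\mu(D(w',r))$ gives
$$
\Bigl(\tfrac{r}{r'}\Bigr)^{\beta}\;\lesssim\;\frac{\mu(D)}{\mu(D')}\;\lesssim\;\Bigl(\tfrac{r}{r'}\Bigr)^{\alpha}.
$$
Since also $\mu(D')\le\mu(D(w,3r))\lesssim\mu(D)$, the ratio $\mu(D)/\mu(D')$ is bounded below; taking $\gamma:=\min\{\beta,\,1/\alpha\}$ and using $r/r'\ge1$ together with this lower bound to absorb the discrepancy between the two exponents yields $(\mu(D)/\mu(D'))^{\gamma}\lesssim r/r'\lesssim(\mu(D)/\mu(D'))^{1/\gamma}$, which is the claim. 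The only genuinely non-routine step is the reverse doubling inequality; everything else is iteration of the doubling hypothesis together with the triangle inequality.
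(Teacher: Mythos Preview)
The paper does not give its own proof of this lemma; it is simply quoted from \cite[Lemma~2.1]{christ}. Your argument is correct and is in fact the standard one: iterate the doubling inequality for the upper power bound, obtain reverse doubling by placing a disc $D(z',r/2)$ in the annulus $D(z,2r)\setminus D(z,r)$ and comparing its mass to that of $D(z,2r)$ via doubling, then reduce a pair of intersecting discs to a concentric pair by the triangle inequality. One cosmetic point: your justification of $C>1$ appeals to $\mu=\Delta\phi\not\equiv0$, but the lemma is stated for an arbitrary doubling measure; the clean remark is that any nonzero doubling measure on $\C$ has doubling constant strictly greater than $1$, since $C=1$ would force every annulus to have measure zero.
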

From now on we shall assume that $\phi$ is a subharmonic function on $\C$ such
that $\Delta\phi$ is a doubling measure.We denote $D^r(z)=D(z,r\rho(z))$ and for
$r=1$ we simply write $D(z)$ instead of $D^1(z)$. The function $\rho$ has at
most polynomial growth/decay (see \cite[Remark 1]{mmoc}): there exist constants
$C,\beta,\gamma>0$ such that
\begin{equation}\label{decayrho}
C^{-1}\frac{1}{|z|^\gamma}\le \rho(z)\le C |z|^\beta,  \hbox{ \ \ for\ \  }
|z|>1.
\end{equation}
As an immediate consequence of Lemma \ref{lc} one obtains
\begin{Lemma}\label{comparable}\cite{jj} For any $r>0$ there exists $c>0$
depending only on $r$ and the doubling constant for $\Delta \phi$ such that
$$
c^{-1} \rho(\zeta)\le \rho(z)\le c\,\rho(\zeta) \hbox{\ \  for \ \ } \zeta\in
D^r(z). 
$$
\end{Lemma}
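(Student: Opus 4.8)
The plan is to derive Lemma~\ref{comparable} from nothing more than the defining relation $\mu(D(z,\rho(z)))=1$ and Lemma~\ref{lc}. First I would record an auxiliary bound: for every fixed $s>0$ there is a constant $C_s$, depending only on $s$ and the doubling constant of $\mu$, with $\mu(D(z,s\rho(z)))\le C_s$ for all $z\in\C$. This is immediate, since $D(z,s\rho(z))\subseteq D(z,2^k\rho(z))$ for $k=\max(0,\lceil\log_2 s\rceil)$, and iterating the doubling inequality $k$ times starting from the disc $D(z,\rho(z))$ of mass $1$ gives $\mu(D(z,2^k\rho(z)))\le C^k=:C_s$.

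Next, fixing $\zeta\in D^r(z)$ (so $|z-\zeta|<r\rho(z)$), I would use the two elementary inclusions
$$
D(z,\rho(z))\subseteq D(\zeta,(r+1)\rho(z))\subseteq D(z,(2r+1)\rho(z)),
$$
which, combined with the auxiliary bound, yield $1\le\mu(D(\zeta,(r+1)\rho(z)))\le C_{2r+1}$. To prove $\rho(\zeta)\le c\rho(z)$ I would argue by cases: if $\rho(\zeta)\le(r+1)\rho(z)$ there is nothing to do, while if $\rho(\zeta)>(r+1)\rho(z)$ I apply Lemma~\ref{lc} to the concentric discs $D(\zeta,\rho(\zeta))\supsetneq D(\zeta,(r+1)\rho(z))$; the inequality $r/r'\lesssim(\mu(D)/\mu(D'))^{1/\gamma}$ then reads
$$
\frac{\rho(\zeta)}{(r+1)\rho(z)}\lesssim\Bigl(\frac{1}{\mu(D(\zeta,(r+1)\rho(z)))}\Bigr)^{1/\gamma}\le 1.
$$
For the reverse bound $\rho(z)\le c\rho(\zeta)$ I would again split cases: if $\rho(\zeta)\ge(r+1)\rho(z)$ it is trivial, and if $\rho(\zeta)<(r+1)\rho(z)$ I apply Lemma~\ref{lc} to $D(\zeta,(r+1)\rho(z))\supsetneq D(\zeta,\rho(\zeta))$, now feeding in the upper mass bound:
$$
\frac{(r+1)\rho(z)}{\rho(\zeta)}\lesssim\bigl(\mu(D(\zeta,(r+1)\rho(z)))\bigr)^{1/\gamma}\le C_{2r+1}^{1/\gamma}.
$$
Together the two estimates give the asserted constant $c$, depending only on $r$ and the doubling constant.

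I do not expect any real obstacle: this is a short exercise once Lemma~\ref{lc} is in hand. The only point that needs attention is that Lemma~\ref{lc} compares radii of two intersecting discs only when one is strictly larger, so in each direction the regime in which $\rho(\zeta)$ is already comparable to $(r+1)\rho(z)$ must be peeled off separately; and one must keep track of which of the concentric discs $D(\zeta,\rho(\zeta))$ and $D(\zeta,(r+1)\rho(z))$ is the larger, so that the two available mass facts — the value $1$ coming from the definition of $\rho$ and the bound $C_{2r+1}$ coming from iterated doubling — are inserted on the correct side of $r/r'\lesssim(\mu(D)/\mu(D'))^{1/\gamma}$.
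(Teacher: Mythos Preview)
Your proposal is correct and follows precisely the approach indicated in the paper: the paper does not give a detailed argument but simply asserts that Lemma~\ref{comparable} is ``an immediate consequence of Lemma~\ref{lc}'', and you have supplied exactly the routine details of that deduction, using the definition $\mu(D(z,\rho(z)))=1$ together with iterated doubling and the radius--mass comparison of Lemma~\ref{lc}.
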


\noindent We also have
\begin{Lemma}\label{distance1}\cite[p.~205]{christ} If $\zeta\notin D(z)$, then
$$
\frac{\rho(z)}{\rho(\zeta)}\lesssim\Bigl(\frac{|z-\zeta|}{\rho(\zeta)}\Bigr)^{
1-\delta}
$$
for some $\delta\in(0,1)$ depending only on the doubling constant for $\Delta
\phi$.
\end{Lemma}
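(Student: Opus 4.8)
The plan is to reduce to one substantive case by an elementary manipulation and then to exploit the doubling of $\mu=\Delta\phi$ through Lemma~\ref{lc}, using the defining identities $\mu(D(z,\rho(z)))=\mu(D(\zeta,\rho(\zeta)))=1$. Write $R=|z-\zeta|$ and note that $\zeta\notin D(z)$ means exactly $R\ge\rho(z)$. If $\rho(z)\le\rho(\zeta)$ the bound is immediate: with $t=\rho(z)/\rho(\zeta)\le1$ one has $t\le t^{1-\delta}$, and since $s\mapsto s^{1-\delta}$ is increasing and $\rho(z)\le R$,
$$
\frac{\rho(z)}{\rho(\zeta)}\le\Bigl(\frac{\rho(z)}{\rho(\zeta)}\Bigr)^{1-\delta}\le\Bigl(\frac{R}{\rho(\zeta)}\Bigr)^{1-\delta}
$$
for every $\delta\in(0,1)$. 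So the real case is $\rho(\zeta)<\rho(z)\,(\le R)$.

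In that case I would sandwich two concentric pairs of discs by means of the inclusion $D(\zeta,R)\subseteq D(z,2R)$, which holds because $|w-z|\le|w-\zeta|+R\le2R$ for $w\in D(\zeta,R)$. Applying the right-hand inequality of Lemma~\ref{lc} to $D(\zeta,R)\supseteq D(\zeta,\rho(\zeta))$ (legitimate since $R>\rho(\zeta)$) gives the lower bound $\mu(D(\zeta,R))\gtrsim(R/\rho(\zeta))^{\gamma}$, and applying the left-hand inequality to $D(z,2R)\supseteq D(z,\rho(z))$ (legitimate since $2R>\rho(z)$) gives the upper bound $\mu(D(z,2R))\lesssim(R/\rho(z))^{1/\gamma}$, after absorbing the harmless factor $2$. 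Chaining these with the inclusion yields
$$
\Bigl(\frac{R}{\rho(\zeta)}\Bigr)^{\gamma}\lesssim\mu(D(\zeta,R))\le\mu(D(z,2R))\lesssim\Bigl(\frac{R}{\rho(z)}\Bigr)^{1/\gamma}.
$$

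Raising $(R/\rho(\zeta))^{\gamma}\lesssim(R/\rho(z))^{1/\gamma}$ to the power $\gamma$ and solving for $\rho(z)$ gives $\rho(z)\lesssim R^{1-\gamma^{2}}\rho(\zeta)^{\gamma^{2}}$, i.e.\ $\rho(z)/\rho(\zeta)\lesssim(R/\rho(\zeta))^{1-\gamma^{2}}$, which is the assertion with $\delta=\gamma^{2}$. One may take $\gamma\in(0,1)$ in Lemma~\ref{lc} — for overlapping discs with $r>r'$ the triple of the larger contains the smaller, so doubling forces $\mu(D)/\mu(D')\gtrsim1$, and then both inequalities survive a decrease of $\gamma$ — so $\delta\in(0,1)$ and depends only on the doubling constant of $\Delta\phi$, as claimed. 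The one place to be careful is bookkeeping: one must use a lower bound for the $\zeta$-centered disc and an upper bound for the $z$-centered disc, that is, the correct halves of Lemma~\ref{lc}; the reversed choice produces a \emph{lower} bound for $\rho(z)/\rho(\zeta)$ instead of the desired upper bound. Everything else is routine.
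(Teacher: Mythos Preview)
Your argument is correct. The paper does not supply its own proof of this lemma; it simply quotes the statement from Christ~\cite[p.~205]{christ}, so there is nothing to compare against directly. What you have written is precisely the standard derivation of this inequality from the two-sided estimate of Lemma~\ref{lc}: the case split on the sign of $\rho(z)-\rho(\zeta)$, the inclusion $D(\zeta,R)\subseteq D(z,2R)$, and the use of the normalizations $\mu(D(z,\rho(z)))=\mu(D(\zeta,\rho(\zeta)))=1$ to turn the mass ratio into a radius ratio are exactly the ingredients one expects, and your bookkeeping with the two halves of Lemma~\ref{lc} is accurate. The justification that $\gamma$ may be taken in $(0,1)$ is also sound, though in practice one usually just absorbs this into the statement of Lemma~\ref{lc} from the outset.
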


\noindent For $z,\zeta \in \C$, the distance $d_\phi$ induced by the metric
$\rho^{-2}(z)dz\otimes  d\bar z$ is given by
$$
d_\phi(z,\zeta)=\inf_\gamma \int_0^1 |\gamma'(t)|\frac{dt}{\rho(\gamma(t))},
$$
where $\gamma$ runs over the piecewise $\mathcal{C}^1$ curves $\gamma :
[0,1]\rightarrow \C$ with $\gamma(0)=z$ and $\gamma(1)=\zeta$. We observe now
that the metric $\rho^{-2}(z)dz\otimes  d\bar z$ is comparable to the 
Bergman metric: it is well known, see \cite{bergman} that the Bergman metric 
$B(\frac{\partial}{\partial z},z)$ at the point $z$ is given by the 
solution to the extremal problem
\[
 B\Bigl(\frac{\partial}{\partial z},z\Bigr)=\frac{\sup\{|f'(z)|:\ f\in
{\mathcal F}_\phi^2,\ f(z)=0; \|f\|_{{\mathcal F}_\phi^2}=1\}}{\sqrt{{K(z,z)}}}.
\]
where $K(z,\zeta)$ is the Bergman kernel for $\mathcal{F}^2_\phi$. In
\cite[Lemma 20]{mmoc} it is proved that for all $f\in {\mathcal F}_\phi^2$ with
$f(z)=0$ we have $|f'(z)|\lesssim \frac{e^{\phi(z)}}{\rho^2(z)} \|f\|_{{\mathcal
F}_\phi^2}$, thus $B(\frac{\partial}{\partial z},z)\lesssim 1/\rho(z)$. The
other inequality follows taking as $f(\zeta)=C_z(\zeta-z)K(\zeta,z)$
where $C_z$ is taken in such a way that $\|f\|_{{\mathcal F}_\phi^2}=1$. In view
of the estimates of the Bergman kernel stated below it follows that
$B(\frac{\partial}{\partial z},z)\sim 1/\rho(z)$

The following estimates for the Bergman distance $d_\phi$ hold: 
\begin{Lemma}\label{distance2}\cite[Lemma 4]{mmoc}
There exists $\delta\in(0,1)$ such that for every $r>0$ there exists $C_r>0$
such that
$$
C_r^{-1} \frac{|z-\zeta|}{\rho(z)}\le d_\phi(z,\zeta)\le C_r
\frac{|z-\zeta|}{\rho(z)}, \quad \text{for }\zeta\in D^r(z), 
$$
and
$$
C_r^{-1} \Bigl(\frac{|z-\zeta|}{\rho(z)}\Bigr)^\delta\le d_\phi(z,\zeta)\le C_r
\Bigl(\frac{|z-\zeta|}{\rho(z)}\Bigr)^{2-\delta}, \quad \text{for }\zeta\in
D^r(z)^c.
$$ 
\end{Lemma}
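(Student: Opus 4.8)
The plan is to treat the two regimes $\zeta\in D^r(z)$ and $\zeta\in D^r(z)^c$ separately, in each case bounding $d_\phi$ from above by integrating $\rho^{-1}$ along the straight segment $[z,\zeta]$, and from below against an arbitrary competing curve $\gamma$. Throughout, the two workhorses are Lemma~\ref{comparable} (stability of $\rho$ on Carleson-type discs) and Lemma~\ref{distance1} (polynomial decay of $\rho(z)/\rho(w)$ once $w$ leaves $D(z)$).

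\textbf{The near-diagonal case $\zeta\in D^r(z)$.} The segment from $z$ to $\zeta$ stays inside $D^r(z)$, so by Lemma~\ref{comparable} we have $\rho\sim_r\rho(z)$ along it, whence $d_\phi(z,\zeta)\le\int_0^1|\gamma'(t)|\,dt/\rho(\gamma(t))\lesssim_r|z-\zeta|/\rho(z)$. For the lower bound take any curve $\gamma$ joining $z$ to $\zeta$. If $\gamma$ stays in $D^{2r}(z)$, then $\rho\lesssim_r\rho(z)$ on $\gamma$ by Lemma~\ref{comparable}, so $\int_\gamma|\gamma'|/\rho\gtrsim_r\mathrm{length}(\gamma)/\rho(z)\ge|z-\zeta|/\rho(z)$. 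If $\gamma$ exits $D^{2r}(z)$, its initial arc up to the first exit point has Euclidean length at least $2r\rho(z)$ and stays in $\overline{D^{2r}(z)}$, so it alone contributes $\gtrsim_r 2r\gtrsim_r|z-\zeta|/\rho(z)$ (using $|z-\zeta|<r\rho(z)$). Taking the infimum gives $d_\phi(z,\zeta)\gtrsim_r|z-\zeta|/\rho(z)$.

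\textbf{The far case $\zeta\in D^r(z)^c$.} Write $L=|z-\zeta|$. If $L\lesssim_r\rho(z)$ the claim follows from the near-diagonal estimate, since for a ratio $t=L/\rho(z)$ that is bounded above and below one has $t^{2-\delta}\lesssim_r t\lesssim_r t^{\delta}$; so we may assume $L/\rho(z)$ as large as we please. \emph{Upper bound.} Parametrising $[z,\zeta]$ by arclength $u=|z-w|$ gives $d_\phi(z,\zeta)\le\int_0^L du/\rho(w(u))$; on $u\le\rho(z)$ we have $\rho(w(u))\sim\rho(z)$, while on $u>\rho(z)$ Lemma~\ref{distance1} gives $\rho(z)/\rho(w)\lesssim(u/\rho(w))^{1-\delta}$, which rearranges to $\rho(w)\gtrsim\rho(z)^{1/\delta}u^{1-1/\delta}$. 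Integrating term by term yields $d_\phi(z,\zeta)\lesssim 1+(L/\rho(z))^{1/\delta}\lesssim(L/\rho(z))^{1/\delta}$. \emph{Lower bound.} Given a curve $\gamma$ from $z$ to $\zeta$, split it along the dyadic annuli $A_k=\{w:2^k\rho(z)\le|w-z|<2^{k+1}\rho(z)\}$ for $k_0\le k\le K$, with $2^K\sim L/\rho(z)$ and $k_0$ a fixed constant chosen (using Lemma~\ref{comparable}) so large that $z\notin D(w)$ for every $w\in A_k$ with $k\ge k_0$. Since $\gamma$ must cross each such annulus, it contains a sub-arc $\gamma_k$ lying in $\overline{A_k}$ of Euclidean length $\ge 2^k\rho(z)$; on $\overline{A_k}$, Lemma~\ref{distance1} applied with $w$ in the role of $z$ and $z$ in the role of $\zeta$ gives $\rho(w)\lesssim 2^{k(1-\delta)}\rho(z)$, hence $\int_{\gamma_k}|\gamma'|/\rho\gtrsim 2^k\rho(z)\cdot(2^{k(1-\delta)}\rho(z))^{-1}=2^{k\delta}$. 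Summing this geometric series over $k$ and taking the infimum over $\gamma$ gives $d_\phi(z,\zeta)\gtrsim 2^{K\delta}\sim(L/\rho(z))^{\delta}$.

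It then remains to reconcile the exponents: the lower estimate carries the exponent $\delta$ of Lemma~\ref{distance1}, while the segment produces the reciprocal exponent $1/\delta$ in the upper estimate, and together these give a two-sided comparison of $d_\phi(z,\zeta)$ with a fixed power of $|z-\zeta|/\rho(z)$ of the type recorded in \cite[Lemma~4]{mmoc}; checking that the choice of $k_0$, the annulus bookkeeping, and the reduction to large $L/\rho(z)$ are uniform in $r$ is routine. I expect the real work, and the main obstacle, to be the far-field upper bound: exhibiting a single curve whose $\rho$-weighted length is controlled by a fixed power of $|z-\zeta|/\rho(z)$ rests entirely on the polynomial comparability of $\rho$ in Lemma~\ref{distance1}, and one must take care that the decay of $\rho$ along the segment is never faster than the rate $u^{1-1/\delta}$ that lemma permits.
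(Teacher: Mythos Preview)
The paper does not supply a proof of this lemma; it is quoted from \cite[Lemma~4]{mmoc}, so there is no in-paper argument to compare your sketch against. Your approach---Lemma~\ref{comparable} for the near-diagonal two-sided estimate, a straight segment for the far-field upper bound, and a dyadic-annulus decomposition for the far-field lower bound, both driven by Lemma~\ref{distance1}---is the standard one and is correct in substance. The near-diagonal case and the annulus lower bound are complete as written (your choice of $k_0$ to force $z\notin D(w)$ is justified: if $|w-z|<\rho(w)$ then Lemma~\ref{comparable} gives $\rho(w)\sim\rho(z)$, contradicting $|w-z|\ge 2^{k_0}\rho(z)$ once $2^{k_0}$ exceeds the comparability constant).

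The one point worth flagging is the exponent in the far-field upper bound. Your segment computation yields $d_\phi(z,\zeta)\lesssim (|z-\zeta|/\rho(z))^{1/\delta}$ with $\delta$ the constant of Lemma~\ref{distance1}, whereas the statement records the exponent $2-\delta$. You acknowledge this yourself. Recasting your pair $(\delta,1/\delta)$ in the symmetric form $(\delta',2-\delta')$ would require $1/\delta<2$, i.e.\ $\delta>1/2$, which Lemma~\ref{distance1} does not guarantee. So strictly speaking you have proved a qualitatively equivalent statement---two-sided polynomial control of $d_\phi$ by $|z-\zeta|/\rho(z)$---rather than the lemma exactly as phrased. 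For the present paper this is harmless: Lemma~\ref{distance2} is invoked only in the proof of Theorem~\ref{maintheorem}, and there only the \emph{lower} bound $d_\phi\gtrsim(|z-\zeta|/\rho(z))^\delta$ is used, which your annulus argument delivers cleanly. Obtaining the sharper upper exponent $2-\delta$ requires going back to the two-sided comparison in Lemma~\ref{lc} rather than the one-sided consequence recorded as Lemma~\ref{distance1}.
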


\noindent The next result shows that we can  replace the weight $\phi$ by a
regular weight $\tilde\phi$ equivalent to it.
\begin{Proposition}\label{regularization}\cite[Theorem 14]{mmoc}
Let $\phi$ be a subharmonic function with $\mu=\Delta \phi$ doubling. There
exists
$\tilde \phi\in C^\infty(\C)$ subharmonic such that $|\phi-\tilde\phi|\le  c$
with $\Delta \tilde \phi$ doubling 
and 
$$
\Delta \tilde\phi \sim\frac{1}{\rho^2_{\tilde\phi}}\sim\frac{1}{\rho^2_\phi}.
$$
\end{Proposition}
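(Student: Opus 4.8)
The plan is to prescribe the Laplacian of $\tilde\phi$ and recover $\tilde\phi$ by potential theory, which renders smoothness and subharmonicity automatic. Fix a radial $\psi\in C_c^\infty(\D)$ with $\psi\ge0$, $\int_\C\psi\,dm=1$, choose a lattice $\{z_j\}$ that is $\gtrsim1$-separated and $\lesssim1$-dense for the distance $d_\phi$, and let $\{\chi_j\}$ be a partition of unity subordinate to $\{D(z_j)\}$. Put
\[
\tilde\mu=\sum_j\chi_j\,\bigl(\mu*\psi_{\rho(z_j)}\bigr),\qquad\psi_r(\cdot)=r^{-2}\psi(\cdot/r).
\]
This is a positive $C^\infty$ density; since $\mu*\psi_{\rho(z_j)}(w)\sim\mu(D(w,\rho(z_j)))/\rho(z_j)^2$ and, by Lemma \ref{comparable}, $\rho(z_j)\sim\rho(w)$ on $\mathrm{supp}\,\chi_j$, while $\mu(D(w,\rho(w)))=1$ by definition, we get $\tilde\mu\sim\rho^{-2}\,dm$. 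Writing $\mu=\Delta\phi$ and letting $h$ be the globally defined harmonic --- hence $C^\infty$ --- function $h=\phi-U^\mu_{\mathrm{reg}}$, where $U^\nu_{\mathrm{reg}}$ denotes a Weierstrass-regularized logarithmic potential of $\nu$ (the regularization chosen so that the integral converges, which is possible since (\ref{decayrho}) forces $\mu$ to have finite order, and chosen identically for $\mu$ and $\tilde\mu$), I would set $\tilde\phi:=U^{\tilde\mu}_{\mathrm{reg}}+h$. By construction $\tilde\phi\in C^\infty(\C)$, $\tilde\phi$ is subharmonic, $\Delta\tilde\phi=\tilde\mu\sim\rho^{-2}$, and $\tilde\phi-\phi=U^{\tilde\mu}_{\mathrm{reg}}-U^\mu_{\mathrm{reg}}$.

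Granting this, the remaining assertions come out quickly. The measure $\rho^{-2}\,dm$ is doubling: on a disc $D(z,r)$ with $r\lesssim\rho(z)$ the density is essentially constant, whereas for $r\gtrsim\rho(z)$ one has $\int_{D(z,r)}\rho^{-2}\,dm\sim\#\{\text{unit }d_\phi\text{-balls inside }D(z,r)\}\sim\mu(D(z,Cr))$, which is doubling; hence so is $\Delta\tilde\phi$. Moreover $\rho_{\tilde\phi}$ is defined by $\Delta\tilde\phi\bigl(D(z,\rho_{\tilde\phi}(z))\bigr)=1$, and since $\Delta\tilde\phi\sim\rho_\phi^{-2}\,dm$ with $\int_{D(z,\rho_\phi(z))}\rho_\phi^{-2}\,dm\sim1$, we conclude $\rho_{\tilde\phi}\sim\rho_\phi$, so that $\Delta\tilde\phi\sim\rho_{\tilde\phi}^{-2}\sim\rho_\phi^{-2}$.

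The real content is the uniform bound $|\phi-\tilde\phi|\le c$, i.e.\ $|U^{\tilde\mu}_{\mathrm{reg}}-U^\mu_{\mathrm{reg}}|\le c$ on $\C$. I would split the integral defining this difference into a near-diagonal part $|w-z|\le K\rho(z)$ and a far part. On the near part, both $\int_{|w-z|\le K\rho(z)}\bigl|\log|z-w|\bigr|\,d\mu(w)$ and the corresponding integral against $\tilde\mu$ are $\lesssim\int_0^{K\rho(z)}\mu(D(z,s))\,s^{-1}\,ds\lesssim1$: Lemma \ref{lc}, applied to the pair $D(z,\rho(z))\supset D(z,s)$, gives $\mu(D(z,s))\lesssim(s/\rho(z))^\gamma$ for $s\le\rho(z)$, and this is precisely where the normalization $\mu(D(z,\rho(z)))=1$ enters. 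On the far part one must exploit that $\tilde\mu-\mu$ is a redistribution of mass within distance $\sim\rho$, hence mean-zero when tested against functions slowly varying at scale $\rho$; pairing it in this way against the (regularized) logarithmic kernel and summing over the lattice, using (\ref{decayrho}) to ensure convergence, yields an $O(1)$ bound. The same estimate can be read off more concretely from the equivalent description of $\tilde\phi$ as $\phi$ averaged over $D(z,\rho(z))$ against a radial bump: subharmonicity makes this average dominate $\phi(z)$, and the difference is again $\lesssim\int_0^{C\rho(z)}\mu(D(z,s))\,s^{-1}\,ds\lesssim1$.

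I expect the one genuinely delicate point to be the far part of this potential comparison: the near-diagonal estimate, the positivity, the smoothness, and the doubling and $\rho$-comparability properties are all either automatic from the construction or follow routinely from Lemmas \ref{lc} and \ref{comparable}, but controlling the cumulative long-range effect of smoothing at a position-dependent scale --- equivalently, summing the mean-zero-at-scale-$\rho$ errors against a kernel of logarithmic growth over a lattice whose counting function grows only polynomially --- is where the work lies.
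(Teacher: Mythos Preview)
The paper does not prove this proposition; it is quoted verbatim from \cite[Theorem~14]{mmoc} and used as a black box, so there is no argument here to compare yours against.

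As for your sketch itself: the overall strategy---prescribe a smooth positive density $\tilde\mu\sim\rho^{-2}\,dm$ by locally mollifying $\mu$ at scale $\rho$, then produce $\tilde\phi$ via a regularized logarithmic potential plus the harmonic defect of $\phi$---is sound, and you have correctly isolated the only substantial step, namely the uniform bound on $U^{\tilde\mu}_{\mathrm{reg}}-U^{\mu}_{\mathrm{reg}}$ coming from the far part. One imprecision worth flagging: the alternative construction you mention at the end, averaging $\phi$ itself against a radial bump at the variable scale $\rho(z)$, is \emph{not} equivalent to your primary construction. Averaging $\phi$ makes $|\tilde\phi-\phi|\lesssim 1$ immediate (exactly by the subharmonic mean-value inequality and the $\int_0^{C\rho(z)}\mu(D(z,s))\,s^{-1}\,ds$ estimate you wrote down), but then the $z$-dependence of the mollification scale contaminates $\Delta\tilde\phi$ with derivatives of $\rho$, and one must separately argue that those error terms are dominated by $\rho^{-2}$. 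Conversely, your potential-theoretic route makes $\Delta\tilde\phi=\tilde\mu$ automatic but pushes all the work into the far-part potential comparison. Either route can be made to work, but they are two distinct arguments trading one difficulty for another, not two descriptions of the same object.
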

\noindent  We also need the estimates
\begin{Lemma}\label{estimate}\cite{jj}
Let $\phi$ be a subharmonic function with $\mu=\Delta \phi$ doubling. Then for
any $\varepsilon>0$ and $k\ge 0$
$$
\int_\C \frac{|z-\zeta |^k}{\exp d_\phi(z,\zeta)^\varepsilon}\, d\mu(z)\le c
\,\rho^k(\zeta),
$$
where $c>0$ is a constant depending only on $k,\varepsilon$ and on the doubling
constant for $\mu$.
\end{Lemma}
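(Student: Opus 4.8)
The plan is to fix $\zeta\in\C$, write $\rho=\rho(\zeta)$, and split the integral over $\C$ into the disc $D(\zeta)=D(\zeta,\rho)$ together with the dyadic annuli
$$
A_j=D^{2^{j+1}}(\zeta)\setminus D^{2^j}(\zeta),\qquad j\ge 0,
$$
so that $\C=D(\zeta)\cup\bigcup_{j\ge 0}A_j$ since $2^j\rho\to\infty$. On $A_j$ one has $2^j\rho\le|z-\zeta|<2^{j+1}\rho$, hence $|z-\zeta|^k\le 2^k\,2^{jk}\rho^k$, while on $D(\zeta)$ one has $|z-\zeta|^k\le\rho^k$. The contribution of the central disc is then immediately controlled by $\rho^k$, using $\mu(D(\zeta))=1$ (the definition of $\rho$) and $\exp d_\phi(z,\zeta)^\varepsilon\ge 1$.

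For the annuli, note that for every $j\ge 0$ one has $A_j\subset D(\zeta)^c$, so the second estimate in Lemma \ref{distance2} (applied with $r=1$) gives $d_\phi(z,\zeta)\ge C_1^{-1}(|z-\zeta|/\rho)^\delta\ge C_1^{-1}2^{j\delta}$ on $A_j$, whence
$$
\exp d_\phi(z,\zeta)^\varepsilon\ge \exp\bigl(C_1^{-\varepsilon}2^{j\delta\varepsilon}\bigr),\qquad z\in A_j,
$$
where $\delta\in(0,1)$ depends only on the doubling constant for $\mu$. For the measure, the doubling property applied $j+1$ times yields $\mu(A_j)\le\mu\bigl(D(\zeta,2^{j+1}\rho)\bigr)\le C_\mu^{\,j+1}\mu(D(\zeta,\rho))=C_\mu^{\,j+1}$, with $C_\mu$ the doubling constant.

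Combining these estimates,
$$
\int_\C\frac{|z-\zeta|^k}{\exp d_\phi(z,\zeta)^\varepsilon}\,d\mu(z)
\le \rho^k+\sum_{j\ge 0}\frac{2^k\,2^{jk}\rho^k}{\exp\bigl(C_1^{-\varepsilon}2^{j\delta\varepsilon}\bigr)}\,C_\mu^{\,j+1}
= \rho^k\Bigl(1+2^kC_\mu\sum_{j\ge 0}\bigl(2^kC_\mu\bigr)^{j}e^{-C_1^{-\varepsilon}2^{j\delta\varepsilon}}\Bigr),
$$
and the series converges to a finite constant depending only on $k,\varepsilon$ and $C_\mu$, which proves the claimed bound $\lesssim\rho^k(\zeta)$.

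The argument is essentially a routine dyadic decomposition; the only delicate point is the balance between the \emph{geometric} growth of $\mu\bigl(D(\zeta,2^j\rho(\zeta))\bigr)$ and of $|z-\zeta|^k$ along the annuli, on the one hand, and the decay coming from the denominator, on the other. The crucial input is that Lemma \ref{distance2} furnishes a genuine positive power $\delta$ in the lower bound $d_\phi(z,\zeta)\gtrsim(|z-\zeta|/\rho(\zeta))^\delta$ away from $D(\zeta)$: this turns $\exp d_\phi(z,\zeta)^\varepsilon$ into a \emph{stretched} exponential $\exp(c\,2^{j\delta\varepsilon})$ on $A_j$, which dominates every geometric factor and makes the resulting series summable with a constant depending only on the admissible quantities.
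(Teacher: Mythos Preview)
Your argument is correct. The paper itself gives no proof of Lemma~\ref{estimate}: it is quoted from \cite{jj}, so there is no ``paper's own proof'' to compare against. Your dyadic decomposition around $\zeta$ is the natural route and works exactly as you describe; the key inputs---$\mu(D(\zeta))=1$, the doubling bound $\mu(D(\zeta,2^{j+1}\rho))\le C_\mu^{\,j+1}$, and the lower bound $d_\phi(z,\zeta)\gtrsim(|z-\zeta|/\rho(\zeta))^\delta$ off $D(\zeta)$---are precisely the available tools, and the stretched exponential beats the geometric growth in the sum.

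One small remark on presentation: in Lemma~\ref{distance2} as stated in the paper the denominator is $\rho(z)$ and the condition is $\zeta\in D^r(z)^c$, whereas you use it with $\rho(\zeta)$ and the condition $z\in D(\zeta)^c$. This is harmless because $d_\phi$ is symmetric, so the lemma applies equally with the roles of $z$ and $\zeta$ interchanged; it may be worth saying this explicitly to avoid any confusion.
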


\begin{Theorem}\label{kernelestimate}\cite{jj}
Let $K(z,\zeta)$ be the Bergman kernel for $\mathcal{F}^2_\phi$. There
exist positive constants $c$ and $\varepsilon$ (depending only on the doubling
constant 
for $\Delta\phi$) such that for any $z,\zeta\in \C$
$$
 |K(z,\zeta)|\le
c\frac{1}{\rho(z)\rho(\zeta)}\frac{e^{\phi(z)+\phi(\zeta)}}{\exp
d^\varepsilon_\phi(z,\zeta)}.
$$
\end{Theorem}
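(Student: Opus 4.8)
The plan is to reduce the pointwise bound to a weighted $L^2$-estimate for the reproducing kernel, namely to
$$\int_\C |K(z,\zeta)|^2 e^{-2\phi(z)}\exp\bigl(c\,d_\phi(z,\zeta)^\varepsilon\bigr)\,dm(z)\;\lesssim\;K(\zeta,\zeta)\;\sim\;\frac{e^{2\phi(\zeta)}}{\rho^2(\zeta)},$$
the last comparison being the diagonal estimate. Indeed, the sub-mean value inequality for the holomorphic function $K(\cdot,\zeta)$ on the disc $D(z)$ --- valid because, by doubling of $\Delta\phi$, on $D(z)$ the weight $\phi$ differs by $O(1)$ from the real part of a holomorphic function --- gives $|K(z,\zeta)|^2 e^{-2\phi(z)}\lesssim\rho^{-2}(z)\int_{D(z)}|K(\cdot,\zeta)|^2 e^{-2\phi}\,dm$, and since $\rho$ and $d_\phi(\cdot,\zeta)$ are comparable to $\rho(z)$ and $d_\phi(z,\zeta)$ throughout $D(z)$ (Lemmas \ref{comparable} and \ref{distance2}), the weighted estimate yields $|K(z,\zeta)|^2 e^{-2\phi(z)}\lesssim \rho^{-2}(z)\rho^{-2}(\zeta)e^{2\phi(\zeta)}e^{-c' d_\phi(z,\zeta)^\varepsilon}$, which is the claim after taking square roots and decreasing $\varepsilon$ to absorb the constant $c'$. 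By Proposition \ref{regularization} we may assume throughout that $\phi\in C^\infty$ with $\Delta\phi\sim\rho^{-2}$. The diagonal estimate itself is routine: the upper bound is the above sub-mean value inequality at $z=\zeta$ together with $K(\zeta,\zeta)=\sup\{|f(\zeta)|^2:\|f\|_{\mathcal F^2_\phi}=1\}$, and the lower bound follows by correcting the bump $\chi_\zeta$ (equal to $1$ on $D(\zeta)$, supported in $D^2(\zeta)$) to an entire function via H\"ormander's $L^2$-estimate for $\bar\partial$ with weight $2\phi$ and then testing the extremal problem against it.

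For the weighted estimate I would use the H\"ormander method with a twisted weight adapted to $\zeta$. First one constructs $\psi_\zeta\in C^\infty(\C)$ with $\psi_\zeta\ge 0$, $\psi_\zeta\equiv 0$ on $D^3(\zeta)$, $\psi_\zeta(z)\sim d_\phi(z,\zeta)^\varepsilon$ for $d_\phi(z,\zeta)\ge 3$, and --- this is the crucial technical point --- $|\partial\psi_\zeta(z)|^2\le\eta\,\rho^{-2}(z)$ and $|\Delta\psi_\zeta(z)|\le\tfrac12\Delta\phi(z)$ for a prescribed small constant $\eta$; this is possible for $\varepsilon$ small because, with respect to the metric $\rho^{-2}\,dz\otimes d\bar z$, the function $d_\phi(\cdot,\zeta)$ is Lipschitz (Lemma \ref{distance2}) and $\rho$ varies slowly, so a small power of $d_\phi$ has small gradient and Laplacian relative to $\Delta\phi\sim\rho^{-2}$. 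Then, with $\lambda_\zeta\sim e^{2\phi(\zeta)}/\rho^2(\zeta)$ normalised so that $\lambda_\zeta\int\chi_\zeta e^{-2\phi}=1$ (and, so as to make the quasi-kernel genuinely close to $K(\cdot,\zeta)$, $\chi_\zeta$ replaced by a refinement built from the osculating Gaussian of $\phi$ at $\zeta$, which reproduces the low-order Taylor data), one solves $\bar\partial v=\bar\partial(\lambda_\zeta\chi_\zeta)$ with minimal norm in $L^2\bigl(e^{-2(\phi-\psi_\zeta)}\bigr)$; since $\Delta(\phi-\psi_\zeta)\ge\tfrac12\Delta\phi$, and since $\partial\chi_\zeta$ is supported in $D^2(\zeta)\setminus D(\zeta)$ where $\psi_\zeta=0$, $\rho\sim\rho(\zeta)$ and $e^{-2\phi}\sim e^{-2\phi(\zeta)}$, H\"ormander's estimate gives $\int|v|^2 e^{-2\phi}e^{2\psi_\zeta}\lesssim e^{2\phi(\zeta)}/\rho^2(\zeta)\sim K(\zeta,\zeta)$. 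Consequently $k_\zeta:=\lambda_\zeta\chi_\zeta-v$ is entire and satisfies $\int|k_\zeta|^2 e^{-2\phi}e^{2\psi_\zeta}\lesssim K(\zeta,\zeta)$; that is, the quasi-kernel already has the desired weighted decay, and in particular $k_\zeta\in\mathcal F^2_\phi$ with $\|k_\zeta\|_{\mathcal F^2_\phi}\lesssim K(\zeta,\zeta)^{1/2}$.

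It remains to transfer this from $k_\zeta$ to $K(\cdot,\zeta)$. The relation $\langle f,k_\zeta\rangle_{\mathcal F^2_\phi}=f(\zeta)+(\text{error})$ for holomorphic $f$ --- the error coming from the Taylor remainder of $f$ and the higher-order terms of $\phi$ on $D^2(\zeta)$, whose size is governed by the derivative estimates for $\mathcal F^2_\phi$ from \cite{mmoc} --- shows that $E_\zeta:=K(\cdot,\zeta)-P_{\mathcal F^2_\phi}k_\zeta$ is again a reproducing-kernel-type object attached to a $\bar\partial$-datum supported off a neighbourhood of $\zeta$; feeding $E_\zeta$ back into the same construction produces an error of strictly smaller weighted norm, and summing the resulting Neumann series --- using Lemma \ref{estimate} to control the integrals $\int|z-\zeta|^k e^{-d_\phi(z,\zeta)^\varepsilon}\,d\mu(z)$ that appear when the $\bar\partial$-data get spread over the annuli around $\zeta$ --- identifies $K(\cdot,\zeta)$ with a fixed multiple of $k_\zeta$ plus a term obeying the same weighted bound. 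I expect the two genuine obstacles to be exactly (i) the construction of the good weight $\psi_\zeta$ with simultaneously small gradient and Laplacian at the scale $\rho$, and (ii) making the quasi-kernel close enough to $K(\cdot,\zeta)$ for the iteration of the previous paragraph to converge: the naive bump $\chi_\zeta$ produces an error of the same order as the evaluation functional, so one really needs the osculating-Gaussian refinement and the geometric-series argument, not a single comparison.
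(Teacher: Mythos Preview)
This theorem is not proved in the present paper at all: it is quoted from \cite{jj} as a preliminary fact and used as a black box. There is therefore no ``paper's own proof'' to compare your attempt against; you are in effect sketching the argument of \cite{jj}.

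Your first two paragraphs do capture the architecture of that argument (which goes back to Christ \cite{christ}): regularize $\phi$ via Proposition~\ref{regularization}; build a twist $\psi_\zeta\ge0$ vanishing near $\zeta$, growing like $d_\phi(\cdot,\zeta)^\varepsilon$ at infinity, with $|\bar\partial\psi_\zeta|^2$ and $|\Delta\psi_\zeta|$ small relative to $\Delta\phi\sim\rho^{-2}$; solve $\bar\partial$ with H\"ormander for the weight $\phi-\psi_\zeta$ to turn a localized bump into an entire $k_\zeta$ with $\int|k_\zeta|^2e^{-2\phi+2\psi_\zeta}\lesssim K(\zeta,\zeta)$; and pass from weighted $L^2$ to pointwise via the sub-mean-value inequality on $D(z)$. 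All of that is correct in outline and is what is done in \cite{jj}.

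The genuine gap is in your last paragraph. Since $k_\zeta=\lambda_\zeta\chi_\zeta-v$ is holomorphic, $P_{\mathcal F^2_\phi}k_\zeta=k_\zeta$, so $E_\zeta=K(\cdot,\zeta)-k_\zeta$ is holomorphic and is \emph{not} ``attached to a $\bar\partial$-datum supported off a neighbourhood of~$\zeta$''; there is nothing to feed back into the H\"ormander construction. What you would actually need for a Neumann series is that the residual functional $f\mapsto f(\zeta)-\langle f,k_\zeta\rangle_{\mathcal F^2_\phi}$ have norm a fixed fraction~$<1$ of the evaluation functional, uniformly in $\zeta$ and \emph{in the twisted norm}, and you neither formulate this precisely nor indicate why the osculating-Gaussian refinement delivers it. In the literature the transfer from $k_\zeta$ to $K(\cdot,\zeta)$ is handled without iteration: one either adds a logarithmic singularity $\log|w-\zeta|^2$ to the H\"ormander weight so that the minimal solution is forced to vanish at $\zeta$ (whence $k_\zeta(\zeta)=\lambda_\zeta$ exactly and an extremal-problem comparison applies), or one first proves the off-diagonal decay for the canonical-solution kernel $C(z,\zeta)$ and then recovers $K$ from $C$ through $P=I-N\bar\partial$ applied to a peak function at~$\zeta$. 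Either route replaces your iteration by a single direct step.
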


\begin{Lemma}\label{close}\cite{jj}
There exists $\alpha>0$ such that
$$
|K(z,\zeta)|\sim K(z,z)^{1/2}K(\zeta,\zeta)^{1/2}\sim
\frac{e^{\phi(z)+\phi(\zeta)}}{\rho(z)
\rho(\zeta)},\quad \hbox{ if } \ |z-\zeta|<\alpha \rho(z).
$$
\end{Lemma}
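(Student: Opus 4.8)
First, the upper estimates are immediate from Theorem~\ref{kernelestimate}: it gives $|K(z,\zeta)|\le c\,e^{\phi(z)+\phi(\zeta)}/(\rho(z)\rho(\zeta))$ (since $1/\exp d^{\varepsilon}_{\phi}(z,\zeta)\le1$) and, for $\zeta=z$, $K(z,z)\le c\,e^{2\phi(z)}/\rho^{2}(z)$; together with the Cauchy--Schwarz bound $|K(z,\zeta)|^{2}\le K(z,z)K(\zeta,\zeta)$ this makes all three quantities in the statement $\lesssim e^{\phi(z)+\phi(\zeta)}/(\rho(z)\rho(\zeta))$. So it remains to prove $K(z,z)\gtrsim e^{2\phi(z)}/\rho^{2}(z)$ for all $z$, and $|K(z,\zeta)|\gtrsim e^{\phi(z)+\phi(\zeta)}/(\rho(z)\rho(\zeta))$ for $|z-\zeta|<\alpha\rho(z)$ with $\alpha$ small; together with the upper bounds and $\rho(z)\sim\rho(\zeta)$ (Lemma~\ref{comparable}) these yield every comparability asserted. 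The diagonal estimate is the standard Hörmander peak-function bound (see \cite{mmoc}, from which it may simply be quoted); its one delicate feature --- that $\phi$ can oscillate by $\gg1$ on $D(z)$ --- is handled by writing, on $D(z)$, $\phi=\mathrm{Re}\,P+\phi_{0}$ with $P$ holomorphic and $\phi_{0}$ the logarithmic potential of the restriction of $\Delta\phi$ to $D(z)$, which by $\Delta\phi(D(z))=1$ and Lemma~\ref{lc} is bounded on $D^{1/2}(z)$ (so $|e^{P}|\sim e^{\phi}$ there), and then testing $K(z,z)$ against $f=\chi e^{P}-u$, where $\chi$ is a cut-off adapted to $D^{1/2}(z)$ and $u$ solves $\bar\partial u=e^{P}\bar\partial\chi$ with $u(z)=0$ and $\int|u|^{2}e^{-2\phi}\lesssim\rho^{2}(z)$.

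The real work is the off-diagonal lower bound, for which the kernel estimates in hand (Theorem~\ref{kernelestimate}, Lemma~\ref{distance2}) are of no use, being one-sided. Fix $\zeta$ and set $F_{\zeta}(w):=K(w,\zeta)$, which is entire in $w$, lies in $\mathcal{F}^{2}_{\phi}$, and satisfies $\|F_{\zeta}\|^{2}=F_{\zeta}(\zeta)=K(\zeta,\zeta)>0$. On $D(\zeta)$ write again $\phi=\mathrm{Re}\,P+\phi_{0}$ with $\phi_{0}$ bounded on $D^{1/2}(\zeta)$, so that $g:=F_{\zeta}\,e^{-P}$ is holomorphic on $D^{1/2}(\zeta)$ with $|g|\sim|F_{\zeta}|\,e^{-\phi}$ there. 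Theorem~\ref{kernelestimate} and Lemma~\ref{comparable} give $|g|\lesssim M:=e^{\phi(\zeta)}/\rho^{2}(\zeta)$ on $D^{1/2}(\zeta)$, while the diagonal estimate gives $|g(\zeta)|\sim K(\zeta,\zeta)\,e^{-\phi(\zeta)}\gtrsim M$. A Cauchy estimate for $g$ on $D^{1/4}(\zeta)$ then yields $\sup|g'|\lesssim M/\rho(\zeta)$, hence $|g(w)-g(\zeta)|\lesssim(|w-\zeta|/\rho(\zeta))\,M$, so there is $\alpha>0$ with $|g(w)|\gtrsim M$ whenever $|w-\zeta|<\alpha\rho(\zeta)$. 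Since $\rho(z)\sim\rho(\zeta)$ when $|z-\zeta|<\alpha\rho(z)$ (Lemma~\ref{comparable}), after shrinking $\alpha$ we may take $w=z$ and obtain $|K(z,\zeta)|=|g(z)|\,|e^{P(z)}|\sim|g(z)|\,e^{\phi(z)}\gtrsim M\,e^{\phi(z)}=e^{\phi(z)+\phi(\zeta)}/\rho^{2}(\zeta)\sim e^{\phi(z)+\phi(\zeta)}/(\rho(z)\rho(\zeta))$, which is the missing lower bound.

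The step I expect to be the crux is this local replacement of $\phi$ by the holomorphic model $e^{P}$ with $|e^{P}|\sim e^{\phi}$ on a disc of radius comparable to $\rho(\zeta)$: this is exactly where the doubling hypothesis is used essentially, through the control of the logarithmic potential of $\Delta\phi$ afforded by $\Delta\phi(D(\zeta))=1$ and Lemma~\ref{lc}. Granted that, the off-diagonal bound is just the elementary remark that a bounded holomorphic function whose value at a point is comparable to its supremum over a disc remains comparably large on a slightly smaller disc --- which is all the Cauchy estimate above encodes --- and the diagonal bound is its limiting case $\zeta\to z$ (or, directly, the peak-function construction).
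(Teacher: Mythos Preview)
The paper does not prove this lemma: it is quoted verbatim from \cite{jj} (as are the surrounding kernel estimates), so there is no ``paper's own proof'' to compare with. Your argument is correct and is essentially the standard one underlying the result in \cite{jj} and \cite{mmoc}: the Riesz decomposition $\phi=\mathrm{Re}\,P+\phi_{0}$ on $D(\zeta)$ with $\phi_{0}$ bounded on $D^{1/2}(\zeta)$ (this is exactly where doubling enters, via the estimate $\mu(D(w,r))\lesssim (r/\rho(\zeta))^{\gamma}$ from Lemma~\ref{lc}, which makes the Green potential of $\mu|_{D(\zeta)}$ bounded below on the smaller disc), the peak-function test $f=\chi e^{P}-u$ for the diagonal lower bound, and then the Cauchy-estimate continuity argument for $g=K(\cdot,\zeta)e^{-P}$ to propagate the lower bound off the diagonal. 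One small remark: you should say explicitly that you take $P$ so that $h=\mathrm{Re}\,P$ is the harmonic part in the Riesz decomposition on $D(\zeta)$ (so $\phi_{0}\le 0$ is the Green potential), since the phrase ``logarithmic potential of the restriction of $\Delta\phi$'' is ambiguous up to an additive harmonic function and one needs $\phi_{0}$ bounded on both sides; with the Green potential the upper bound is trivial and the lower bound is the doubling computation you indicate.
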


\noindent On the diagonal we have 
\begin{equation}\label{diagonalestimate}
K(z,z)\sim \frac{e^{2\phi(z)}}{\rho^2(z)},\quad z\in\C.
\end{equation}

\noindent For $\lambda\in\D$, we denote by $k_\lambda$ the normalized
reproducing kernel 
of $\mathcal{F}^2_\phi$, i.e.
$$
k_\lambda(z)=\frac{K(z,\lambda)}{K(\lambda,\lambda)^{1/2}},\quad z,\lambda\in\C.
$$ 
Finally, let us recall that a compact operator $T$ acting on a Hilbert space 
belongs to the Schatten class $\mathcal{S}^p$ if the sequence of eigenvalues 
of $(T^*T)^{1/2}$  belongs to $l^p$.

\section{Hankel operators on $\mathcal{F}^2_\phi$}

\noindent As already mentioned in the introduction, the canonical solution 
operator $N$ to $\bar \partial$ is defined on $L^2(e^{-2\phi})$ by
$$\bar\partial Nf=f
\hbox{\ \ \ \  and\ \ \ \ } Nf\perp \mathcal{F}^2_\phi.
$$ 
Let us now consider the restriction of $N$ to $\mathcal{F}^2_\phi$. Notice that
if $f\in \mathcal{F}^2_\phi$ and $\bar z f\in L^2(e^{-2\phi})$, then 
\begin{equation}\label{hankeldbar}
Nf=(I-P)(\bar z f),
\end{equation}
where $P$ is the orthogonal projection of $L^2(e^{-2\phi})$ onto
$\mathcal{F}^2_\phi$. In general, $\bar z f\in L^2(e^{-2\phi})$  does not hold
for all $f\in\mathcal{F}^2_\phi$ (see e.g.\cite{schneiderknirsch}), but it
follows from Theorem  \ref{kernelestimate} that  ${\bar z} k_\lambda\in
L^2(e^{-2\phi})$ for all $\lambda\in \C$. Since the subset $\hbox{Span}
\{k_\lambda: \lambda\in\C\}$ is dense in $\mathcal{F}^2_\phi$, we deduce from
(\ref{hankeldbar}) that $N$ coincides with the big Hankel operator acting on
$\mathcal{F}^2_\phi$ with symbol $\bar z$. Motivated by this last fact, we now
aim to study Hankel operators with anti-analytic symbols on
$\mathcal{F}^2_\phi$. Given an entire function $g$ so that there exists a dense
subset $A$ of $\mathcal{F}^2_\phi$ with $\bar g f\in L^2(e^{-2\phi})$ for $f\in
A$, the big Hankel operator with symbol $\bar g$ is densely defined by 
$$
H_{\bar g}f= \bar g f-P(\bar g f)=(I-P)(\bar g f),\quad f\in A,
$$ 
where $P$ is the orthogonal projection of $L^2(e^{-2\phi})$ onto 
$\mathcal{F}^2_\phi$. 
We consider symbols $g$ such that 
$$\bar{g} k_\lambda \in L^2(e^{-2\phi}) \hbox{ for all }\lambda\in \C.$$
It follows from Theorem \ref{kernelestimate} that, for example, polynomial 
symbols satisfy this assumption. By the reproducing
formula in $\mathcal{F}^2_\phi$ we get
\begin{equation}\label{kernelaction}
H_{\bar g} k_\lambda (z)=(\overline{g(z)}- \overline{g(\lambda)}) k_\lambda(z),
\quad z,\lambda\in\C.
\end{equation}
For the sake of completeness we shall first characterize the boundedness and 
compactness of $H_{\bar g}$.
Let us state the following theorem due to H\"ormander which is essential to our 
approach.
\begin{Theorem}\label{hormander}\cite{hoermander} Let $\Omega\subseteq \C$ be a 
domain and $\phi\in C^2(\Omega)$
be such that $\Delta\phi \ge 0$. For any $f\in L^2_{loc}(\Omega)$ there exists 
a solution $u$ to $\bar\partial u=f$
such that 
$$
\int |u|^2 e^{-2\phi} dm \le \int \frac{|f|^2}{\Delta \phi} e^{-2\phi} dm.
$$
\end{Theorem}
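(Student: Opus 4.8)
The plan is to argue as H\"ormander did, by Hilbert-space duality, exploiting the fact that in one complex variable the situation is especially favorable: every domain $\Omega\subseteq\C$ is pseudoconvex, and a $(0,1)$-form is automatically $\bar\partial$-closed, so that no compatibility condition on $f$ is needed. We may assume the right-hand side $\int_\Omega|f|^2e^{-2\phi}/\Delta\phi\,dm$ is finite, since otherwise there is nothing to prove. Let $H_0=L^2(\Omega,e^{-2\phi})$ (functions) and let $H_1$ be the corresponding $L^2$-space of $(0,1)$-forms, so that $\bar\partial\colon H_0\to H_1$ is a densely defined closed operator; write $\bar\partial^{*}$ for its Hilbert-space adjoint. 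A short integration by parts gives $\bar\partial^{*}(g\,d\bar z)=-e^{2\phi}\,\partial_z\!\bigl(e^{-2\phi}g\bigr)$.

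The first step is the basic a priori inequality
\[
\int_\Omega\Delta\phi\,|g|^2e^{-2\phi}\,dm\ \le\ \|\bar\partial^{*}g\|_{H_0}^{2},\qquad g\in\mathrm{Dom}(\bar\partial^{*}).
\]
For $g\in C^\infty_c(\Omega)$ this is the one-dimensional Bochner--Kodaira identity: integrating by parts one finds that the commutator $[\bar\partial,\bar\partial^{*}]$ acts as multiplication by $\Delta\phi$, so that $\|\bar\partial^{*}(g\,d\bar z)\|_{H_0}^{2}$ equals the nonnegative term $\int_\Omega|\partial_{\bar z}g|^2e^{-2\phi}\,dm$ plus $\int_\Omega\Delta\phi\,|g|^2e^{-2\phi}\,dm$, and one simply discards the first term. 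The passage from $C^\infty_c(\Omega)$ to the whole of $\mathrm{Dom}(\bar\partial^{*})$ is the H\"ormander density lemma; the convenient route is first to reduce, by replacing $\phi$ with $\phi+\varepsilon|z|^2$ and exhausting $\Omega$, to the case of a bounded $\Omega$ with smooth boundary, $\phi$ smooth up to $\overline\Omega$, and $\Delta\phi$ bounded below by a positive constant, where a cutoff-and-mollification argument (using pseudoconvexity of $\Omega$) shows that smooth forms are dense in $\mathrm{Dom}(\bar\partial)\cap\mathrm{Dom}(\bar\partial^{*})$ in the graph norm; the auxiliary hypotheses are then removed by monotone convergence in the inequality.

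Granting the a priori estimate, consider the linear functional $T$ on the subspace $\{\bar\partial^{*}g:g\in\mathrm{Dom}(\bar\partial^{*})\}\subseteq H_0$ defined by $T(\bar\partial^{*}g)=\langle f,g\rangle_{H_1}$ (the pairing is absolutely convergent for such $g$ by the estimate below, even if $f\notin H_1$). By the Cauchy--Schwarz inequality with respect to the measure $\Delta\phi\,e^{-2\phi}\,dm$,
\[
\bigl|\langle f,g\rangle_{H_1}\bigr|\ \le\ \Bigl(\int_\Omega\frac{|f|^2}{\Delta\phi}\,e^{-2\phi}\,dm\Bigr)^{1/2}\Bigl(\int_\Omega\Delta\phi\,|g|^2e^{-2\phi}\,dm\Bigr)^{1/2}\ \le\ \Bigl(\int_\Omega\frac{|f|^2}{\Delta\phi}\,e^{-2\phi}\,dm\Bigr)^{1/2}\|\bar\partial^{*}g\|_{H_0},
\]
so $T$ is well defined (if $\bar\partial^{*}g=0$ then $\langle f,g\rangle_{H_1}=0$) and bounded, with norm at most $\bigl(\int_\Omega|f|^2e^{-2\phi}/\Delta\phi\,dm\bigr)^{1/2}$. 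Extending $T$ to all of $H_0$ by Hahn--Banach without increasing its norm and representing it by Riesz as $T(h)=\langle h,u\rangle_{H_0}$, we obtain $u\in H_0$ with $\|u\|_{H_0}^{2}\le\int_\Omega|f|^2e^{-2\phi}/\Delta\phi\,dm$ and $\langle\bar\partial^{*}g,u\rangle_{H_0}=\langle f,g\rangle_{H_1}$ for every $g\in\mathrm{Dom}(\bar\partial^{*})$. By the definition of the adjoint this says precisely that $u\in\mathrm{Dom}\bigl((\bar\partial^{*})^{*}\bigr)$ and $\bar\partial u=f$ in the sense of distributions, which is the asserted solution.

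The routine ingredients are the integration by parts behind the Bochner--Kodaira identity and the Hahn--Banach/Riesz bookkeeping. The one genuinely delicate point is the density lemma underlying the a priori estimate — passing from smooth compactly supported forms to all of $\mathrm{Dom}(\bar\partial^{*})$, and at the same time handling the possible vanishing of $\Delta\phi$ (where, however, finiteness of the right-hand side already forces $f$ to vanish) — which is why one first establishes the statement under the extra regularity and strict-subharmonicity hypotheses and then passes to the limit.
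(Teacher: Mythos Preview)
The paper does not actually prove this theorem; it merely quotes it from H\"ormander \cite{hoermander} and uses it as a black box (the estimate is invoked in (\ref{he}) and in the proof of Theorem~\ref{compactness}). So there is no ``paper's own proof'' to compare against. What you have written is a faithful outline of H\"ormander's original functional-analytic argument --- the one-variable Bochner--Kodaira/Morrey--Kohn identity giving the basic a priori estimate, the density lemma to pass from $C^\infty_c$ to $\mathrm{Dom}(\bar\partial^{*})$, and the Cauchy--Schwarz/Hahn--Banach/Riesz duality step that produces the solution with the asserted bound. Your sketch is correct in its architecture; the only caveat is the usual one about constants (whether $\Delta\phi$ denotes $4\partial_z\partial_{\bar z}\phi$ or $\partial_z\partial_{\bar z}\phi$ affects whether the inequality holds with constant $1$ or $2$), which does not matter for any application in this paper.
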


\begin{Theorem}\label{boundedness}
$H_{\bar g}$ extends to a bounded linear operator on $\mathcal{F}^2_\phi$ if 
and only if $|g'| \rho$ is bounded.
\end{Theorem}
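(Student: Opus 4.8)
The plan is to prove the two implications separately: sufficiency will follow from H\"ormander's Theorem~\ref{hormander} applied to a carefully identified $\bar\partial$-equation, and necessity will follow by testing boundedness against the normalized reproducing kernels $k_\lambda$.

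For sufficiency I would start from the observation that, for $f$ in the dense domain $A=\hbox{Span}\{k_\lambda:\lambda\in\C\}$, the function $H_{\bar g}f=(I-P)(\bar gf)$ is exactly the solution of \emph{minimal} $L^2(e^{-2\phi})$-norm to the equation $\bar\partial u=\overline{g'}\,f$. Indeed $P(\bar gf)\in\mathcal{F}^2_\phi$ is holomorphic, so $\bar\partial\bigl((I-P)(\bar gf)\bigr)=\bar\partial(\bar gf)=\bar g\,\bar\partial f+f\,\bar\partial\bar g=f\,\overline{g'}$, while $(I-P)(\bar gf)\perp\mathcal{F}^2_\phi=\ker\bar\partial\cap L^2(e^{-2\phi})$. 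Hence, for \emph{any} $L^2$-solution $u$ of $\bar\partial u=\overline{g'}f$ we have $\|H_{\bar g}f\|\le\|u\|$. I would then invoke H\"ormander, but for the regularized weight $\tilde\phi$ of Proposition~\ref{regularization} (which is $C^\infty$, subharmonic, satisfies $|\phi-\tilde\phi|\le c$, and $\Delta\tilde\phi\sim\rho^{-2}$), to obtain a solution $u$ with
$$
\int_\C|u|^2e^{-2\tilde\phi}\,dm\le\int_\C\frac{|g'|^2|f|^2}{\Delta\tilde\phi}\,e^{-2\tilde\phi}\,dm\lesssim\int_\C|g'(z)|^2\rho^2(z)\,|f(z)|^2e^{-2\phi(z)}\,dm(z).
$$
If $\||g'|\rho\|_\infty=M<\infty$, the right-hand side is $\lesssim M^2\|f\|^2_{\mathcal{F}^2_\phi}$; since $e^{-2\tilde\phi}\sim e^{-2\phi}$ this yields $\|H_{\bar g}f\|\lesssim M\|f\|$ on $A$, and $H_{\bar g}$ extends boundedly.

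For necessity I would test against the unit vectors $k_\lambda$. Using \eqref{kernelaction},
$$
\|H_{\bar g}k_\lambda\|^2=\int_\C|g(z)-g(\lambda)|^2|k_\lambda(z)|^2e^{-2\phi(z)}\,dm(z)\ge\int_{D(\lambda,\alpha\rho(\lambda))}|g(z)-g(\lambda)|^2|k_\lambda(z)|^2e^{-2\phi(z)}\,dm(z),
$$
with $\alpha$ as in Lemma~\ref{close}. On that disc Lemma~\ref{close} together with \eqref{diagonalestimate} gives $|k_\lambda(z)|^2e^{-2\phi(z)}\sim\rho(z)^{-2}$, and $\rho(z)\sim\rho(\lambda)$ there by Lemma~\ref{comparable}. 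Writing the entire function $g$ as $g(z)-g(\lambda)=\sum_{n\ge1}a_n(z-\lambda)^n$ with $a_1=g'(\lambda)$ and using the orthogonality of the monomials $(z-\lambda)^n$ on a disc centered at $\lambda$, one gets $\int_{D(\lambda,s)}|g(z)-g(\lambda)|^2\,dm(z)\ge|g'(\lambda)|^2\int_{D(\lambda,s)}|z-\lambda|^2\,dm(z)=\tfrac{\pi}{2}|g'(\lambda)|^2s^4$. Taking $s=\alpha\rho(\lambda)$ produces $\|H_{\bar g}k_\lambda\|^2\gtrsim|g'(\lambda)|^2\rho^2(\lambda)$, so boundedness of $H_{\bar g}$ forces $|g'(\lambda)|\rho(\lambda)\lesssim\|H_{\bar g}\|$ for all $\lambda$.

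The substantive part is the sufficiency direction, and within it the key step is the identification of $H_{\bar g}f$ as the minimal-norm $\bar\partial$-solution with datum $\overline{g'}f$; after that, the relation $\Delta\tilde\phi\sim\rho^{-2}$ converts H\"ormander's inequality directly into the weighted bound, so no extra work is needed. The necessity direction is routine given Lemmas~\ref{comparable} and~\ref{close}; the only point requiring mild care is the monomial-orthogonality lower bound for $\int|g(z)-g(\lambda)|^2$, which keeps the linear term from being cancelled by higher-order ones. Finally, I would note that $|g'|\rho$ bounded together with \eqref{decayrho} forces $g$ to be a polynomial, so the standing hypothesis $\bar gk_\lambda\in L^2(e^{-2\phi})$ holds automatically and $H_{\bar g}$ is genuinely densely defined.
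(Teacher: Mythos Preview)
Your proof is correct and follows essentially the same route as the paper: H\"ormander's estimate (via the regularized weight of Proposition~\ref{regularization}) for sufficiency, and testing on $k_\lambda$ together with Lemmas~\ref{comparable} and~\ref{close} for necessity. The only cosmetic difference is in the lower bound for $\int_{D(\lambda,\alpha\rho(\lambda))}|g(z)-g(\lambda)|^2\,dm$: the paper invokes the Cauchy formula and subharmonicity of $|g_\lambda|$, whereas you use orthogonality of the monomials $(z-\lambda)^n$ on the disc---both yield the same estimate $\gtrsim |g'(\lambda)|^2\rho(\lambda)^4$.
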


\begin{proof}
Assume first that $|g'|\rho$ is bounded. Then notice that for $f\in \hbox{Span} 
\{k_\lambda: \lambda\in\C\}$, $H_{\bar g}f$ is the solution
to $\bar\partial u=\bar g'f $ of minimal $L^2(e^{-2\phi})$-norm. By Theorem 
\ref{hormander} 
and Proposition \ref{regularization} we have
\begin{equation}\label{he}
\int_\C |H_{\bar g} f|^2 e^{-2\phi} dm \lesssim \int_\C |f|^2 |g'|^2 \rho^2 
dm\le (\sup |g'|\rho)^2  \|f\|^2,
\end{equation} 
 which shows that $H_{\bar g}$ can be extended to a bounded linear operator  on
$\mathcal{F}^2_\phi$.

Conversely, assume that $H_{\bar g}$ is bounded. Then we have
$\|H_{\bar g} k_\lambda\|<M$ for $\lambda\in \C$, and using  relation
(\ref{kernelaction}) together with Lemmas \ref{close} and \ref{comparable} we
obtain 
\begin{eqnarray}
M>\|H_{\bar g} k_\lambda \|^2&=&\int_\C |g(z)-g(\lambda)|^2 |k_\lambda(z)|^2
e^{-2\phi(z)} dm(z)\nonumber \\
&\ge& \int_{|z-\lambda|<\alpha \rho(\lambda)} |g(z)-g(\lambda)|^2
|k_\lambda(z)|^2 e^{-2\phi(z)} dm(z)\nonumber \\
&\gtrsim & \frac{1}{\rho^2(\lambda)} \int_{|z-\lambda|<\alpha \rho(\lambda)}
|g(z)-g(\lambda)|^2  dm(z)\nonumber,
\end{eqnarray}
for $\alpha$ small enough.
By the subharmonicity of $|g|$ and the Cauchy formula applied to
$g_\lambda(z)=g(z)- g(\lambda)$ we can now conclude
$$
|g'(\lambda)\rho(\lambda)|\lesssim  \frac{1}{\rho^2(\lambda)}
\int_{|z-\lambda|<\alpha \rho(\lambda)} |g(z)-g(\lambda)|^2  dm(z)<M, \quad
\lambda\in\C.
$$
\end{proof}

\noindent {\bf Remark.} The fact that $\rho$ can have at most polynomial decay 
(see relation (\ref{decayrho})) implies that
$H_{\bar g}$ is bounded only for polynomial symbols of degree smaller than the
order of decay of $\rho$.  Notice also that if $H_{\bar g}$ is bounded, then
$\rho$ has to be bounded,
since $g$ is a polynomial. 
\bigskip

\begin{Theorem}\label{compactness}  $H_{\bar g}$ 
is compact if and only if $|g'(\lambda)| \rho(\lambda)\rightarrow 0$ as 
$|\lambda|\rightarrow\infty$.
\end{Theorem}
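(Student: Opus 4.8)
The plan is to prove both implications following the scheme of the proof of Theorem~\ref{boundedness}. For the necessity, suppose $H_{\bar g}$ is compact; then it is in particular bounded, hence extends to a bounded operator on $\mathcal{F}^2_\phi$ and $|g'|\rho$ is bounded. I would first observe that $k_\lambda\to 0$ weakly in $\mathcal{F}^2_\phi$ as $|\lambda|\to\infty$: we have $\|k_\lambda\|=1$, and for each fixed $\mu$ the estimates of Theorem~\ref{kernelestimate} and (\ref{diagonalestimate}) give $|\langle k_\mu,k_\lambda\rangle|\lesssim e^{-d_\phi^\varepsilon(\mu,\lambda)}\to 0$ as $|\lambda|\to\infty$ (using Lemma~\ref{distance2}), so weak convergence follows from the density of $\operatorname{Span}\{k_\mu:\mu\in\C\}$. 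Compactness then forces $\|H_{\bar g}k_\lambda\|\to 0$, and the chain of inequalities in the proof of Theorem~\ref{boundedness}---based on (\ref{kernelaction}), Lemmas~\ref{close} and \ref{comparable}, the subharmonicity of $|g-g(\lambda)|$ and the Cauchy formula---shows that $|g'(\lambda)|\rho(\lambda)$ is dominated by a constant multiple of $\|H_{\bar g}k_\lambda\|^2$; hence $|g'(\lambda)|\rho(\lambda)\to 0$.

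For the converse, assume $|g'(\lambda)|\rho(\lambda)\to 0$; in particular $|g'|\rho$ is bounded, so $H_{\bar g}$ is bounded by Theorem~\ref{boundedness}. For $R>0$ I would introduce the operator $T_R$, defined on $\operatorname{Span}\{k_\lambda:\lambda\in\C\}$ by letting $T_Rf$ be the minimal $L^2(e^{-2\phi})$-norm solution of $\bar\partial u=\overline{g'}\,\chi_{D(0,R)}f$ (which exists by Theorem~\ref{hormander}, the right-hand side being bounded with support in $\overline{D(0,R)}$). The key point is that $H_{\bar g}f$ and $T_Rf$ are both orthogonal to $\mathcal{F}^2_\phi$, so their difference is itself the minimal-norm solution of $\bar\partial v=\overline{g'}\,\chi_{\{|z|>R\}}f$; hence, by Theorem~\ref{hormander} and Proposition~\ref{regularization}, exactly as in (\ref{he}),
\[
\|H_{\bar g}f-T_Rf\|^2\lesssim\int_{|z|>R}|g'(z)|^2\rho^2(z)|f(z)|^2e^{-2\phi(z)}\,dm(z)\le\Bigl(\sup_{|z|>R}|g'(z)|\rho(z)\Bigr)^2\|f\|^2 .
\]
Since the supremum tends to $0$ as $R\to\infty$, and $T_R$ is bounded (by the same estimate restricted to $D(0,R)$), extending $T_R$ by continuity gives $\|H_{\bar g}-T_R\|\to 0$, so it remains to prove that each $T_R$ is compact.

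For this I would factor $T_R=S_R\circ M_R$, where $M_R\colon\mathcal{F}^2_\phi\to L^2(e^{-2\phi})$ is $M_Rf=\overline{g'}\,\chi_{D(0,R)}f$ and $S_R$ sends a function $w$ supported in $\overline{D(0,R)}$ to the minimal-norm solution of $\bar\partial u=w$. On such $w$, Theorem~\ref{hormander} and Proposition~\ref{regularization}, together with the boundedness of $\rho$ on $\overline{D(0,R)}$, give $\|S_Rw\|^2\lesssim\int|w|^2\rho^2e^{-2\phi}\,dm\lesssim_R\|w\|^2$, so $S_R$ is bounded; and $M_R$ is compact, since the pointwise bound $|f(z)|\lesssim\rho(z)^{-1}e^{\phi(z)}\|f\|$ (a consequence of Theorem~\ref{kernelestimate}) shows that a norm-bounded sequence in $\mathcal{F}^2_\phi$ is uniformly bounded on $\overline{D(0,R)}$, hence---by Montel's theorem---has a subsequence converging uniformly there, and since $|g'|^2e^{-2\phi}$ is bounded on $\overline{D(0,R)}$ the corresponding images under $M_R$ converge in $L^2(e^{-2\phi})$. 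Thus $T_R=S_RM_R$ is compact, which finishes the argument. The step I expect to be the main obstacle is precisely this last one: recognizing that $T_R$ factors through the compact ``restriction'' operator $M_R$ (via the Fock-space pointwise estimate and Montel's theorem) and that the truncated $\bar\partial$-problem is still governed by H\"ormander's inequality; the remaining bookkeeping parallels the proof of Theorem~\ref{boundedness}.
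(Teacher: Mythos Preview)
Your argument is correct in both directions, and it rests on the same ingredients as the paper's proof (H\"ormander's estimate via Proposition~\ref{regularization}, truncation, and Montel's theorem for the sufficiency; testing on the normalized kernels $k_\lambda$ and the Cauchy--subharmonicity lower bound for the necessity). The packaging, however, differs in two places worth noting.

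For the necessity, you deduce $\|H_{\bar g}k_\lambda\|\to 0$ from the weak convergence $k_\lambda\rightharpoonup 0$, which you obtain directly from the off-diagonal decay of the Bergman kernel. The paper instead appeals to the Riesz--Tamarkin criterion to get uniform tail decay of the family $\{H_{\bar g}k_\lambda\}$ and then localizes to a disc $D^\alpha(\lambda)$ lying in that tail. Your route is the more economical of the two; both lead to the same lower bound $|g'(\lambda)\rho(\lambda)|^2\lesssim \|H_{\bar g}k_\lambda\|^2$ (you wrote the exponent on the left as $1$ rather than $2$, but this is immaterial).

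For the sufficiency, the paper does not approximate $H_{\bar g}$ directly. It first dominates $\|H_{\bar g}f\|$ by $\|M_{g'\rho}f\|$ with $M_{g'\rho}f=g'\rho f$, shows $M_{g'\rho}$ is compact by truncating the multiplication (using the same normal-families argument you use for $M_R$), and then relies implicitly on the fact that $\|H_{\bar g}f\|\le\|M_{g'\rho}f\|$ for all $f$ together with compactness of $M_{g'\rho}$ forces $H_{\bar g}$ compact. Your approach is to approximate $H_{\bar g}$ itself in operator norm by the truncated canonical solution operators $T_R$, exploiting linearity of the minimal-norm solution (so that $H_{\bar g}f-T_Rf$ is again a minimal-norm solution and H\"ormander applies), and then factoring $T_R=S_RM_R$ through a compact restriction map. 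This avoids the domination principle and makes the approximation explicit, at the price of carrying the $\bar\partial$-problem through the truncation; both arguments are equally valid and of comparable length.
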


\begin{proof}
Assume first that $|g'(\lambda)| \rho(\lambda)\rightarrow 0$ as
$|\lambda|\rightarrow\infty$. 
As  in relation (\ref{he}) we have
$$
\|H_{\bar g}f\|^2\le \int_\C |g'|^2\rho^2 |f|^2 e^{-2\phi} dm=\|M_{g'\rho}f\|^2,
$$ where $M_{g'\rho}:\mathcal{F}^2_\phi\rightarrow L^2(e^{-2\phi})$ is given by
$M_{g'\rho} f=g'\rho f$.  Hence, if $M_{g'\rho}$ is compact, then $H_{\bar g}$
is compact. We first show that,  for $R>0$, the truncation of $M_{g'\rho}$ given
by
$$
M^R_{g'\rho}f=\chi_{\{|z|<R\}} \, g'\rho f
$$
is compact.
To this end, let $\{f_n\}$ be a bounded sequence in $\mathcal{F}^2_\phi$, i.e. 
$\|f_n\|<M$. Since pointwise evaluation is bounded, 
we deduce that $\{f_n\}$ is a 
normal family and it therefore contains a subsequence $\{f_{n_k}\}$ uniformly 
convergent on compacts to
an entire function $f$. By Fatou's lemma we obtain $f\in \mathcal{F}^2_\phi$. 
Then $f_{n_k}-f\rightarrow 0$
uniformly on compacts
and $\|f_n-f\|<2M$. Hence in order to show that $M^R_{g'\rho}$ is compact,
it is enough to show that for any sequence
$f_n$ (by abuse of notation) that is bounded in the norm and converges uniformly
 to zero on compact sets,
we have $\|M^R_{g'\rho} f_n\|\rightarrow 0$ as $n\rightarrow\infty$.

But this is quite easy to see, as
$$
\|M^R_{g'\rho} f_n\|^2\le \sup_{|z|<R} |f_n|^2 \int_{|z|<R} |g'|^2 \rho^2 
e^{-2\phi}dm \rightarrow 0,
$$
as $n\rightarrow\infty$.
Now 
$$
\|(M_{g'\rho}-M^R_{g'\rho}) f\|^2=\int_{|z|\ge R}|g'|^2 \rho^2  |f|^2
e^{-2\phi}dm\le 
\sup_{|z|>R}|g'|^2 \rho^2  \int_{\C} |f|^2  e^{-2\phi}dm, \ \ 
f\in\mathcal{F}^2_\phi,
$$
which shows that  $\|M_{g'\rho}-M^R_{g'\rho}\|\rightarrow 0$ as  $R\rightarrow
\infty$,  and therefore $M_{g'\rho}$ is compact,
and consequently $H_{\bar g}$ is compact.

Suppose  now $H_{\bar g}$ is compact.  The set $\{k_\lambda\}_{\lambda\in\C}$ is
 bounded in $\mathcal{F}^2_\phi$. By compactness
it follows that the set $\{H_{\bar g}k_\lambda\}_{\lambda\in\C}$ is relatively 
compact in $L^2(e^{-2\phi})$.
Then by the Riesz-Tamarkin compactness theorem (see \cite{berger}) we have 
\begin{equation}
\lim_{R\rightarrow\infty} \int_{|z|>R} |H_{\bar g}k_\lambda|^2 e^{-2\phi} dm =0,
\end{equation}
uniformly in $\lambda$.
Since $H_{\bar g}$ is bounded, we have $B:=\sup_\zeta \rho(\zeta)<\infty$.
For $|\lambda|>R+B$, the inclusion $\{|z-\lambda|\le \rho(\lambda)\}\subset
\{|z|>R\}$ holds, and then for $\alpha>0$ sufficiently small we have
by Lemma \ref{close}
\begin{eqnarray*}
 \int_{|z|>R} |H_{\bar g}k_\lambda|^2 e^{-2\phi} dm&=&\int_{|z|>R} 
|g(z)-g(\lambda)|^2 |k_\lambda(z)|^2 e^{-2\phi(z)} dm(z)\\
&\gtrsim& \int_{|z-\lambda|< \alpha \rho(\lambda)} |g(z)-g(\lambda)|^2 
|k_\lambda(z)|^2 e^{-2\phi(z)} dm(z)\\
&\gtrsim& \frac{1}{\rho^2(\lambda)}\int_{|z-\lambda|< \alpha \rho(\lambda)} 
|g(z)-g(\lambda)|^2 dm(z)\\
&\gtrsim& \rho^2(\lambda) |g'(\lambda)|^2,
\end{eqnarray*}
where the last step above follows again by the Cauchy formula and  the
subharmonicity of $|g|$. This shows that
$$
\lim_{|\lambda|\rightarrow \infty} |g'(\lambda)|\rho(\lambda)=0.
$$
\end{proof}

In the study of the Schatten class membership of $H_{\bar g}$ we use  the
following well-known inequality:
If $T$ is a compact operator from $\mathcal{F}^2_\phi$ to a Hilbert  space
$\mathcal{H}$,  we have
\begin{equation}\label{spineq}
\int_\C \|T k_\lambda\|^p \frac{dm(\lambda)}{\rho^2(\lambda)}\lesssim
\|T\|^p_{\mathcal{S}^p},
\end{equation}
for $p\ge 2$.
To see this, let 
$$
T=\sum_n \lambda_n \langle \cdot, e_n\rangle f_n,
$$
be the canonical form of $T$, where $(e_n)$ is an  orthonormal basis  in
$\mathcal{F}^2_\phi$,
$(f_n)$ is an orthonormal set in $H$, and the $\lambda_n$'s
are the singular numbers of $T$.
Then 
$$
TK(\cdot, \lambda)
= \sum_n \lambda_n \overline{e_n(\lambda)} f_n, \quad \lambda\in\C.
$$
From this we deduce
$$
\int_\C \|TK(\cdot, \lambda)\|^2 e^{-2\phi(\lambda)} dm(\lambda)=\int _\C \sum_n
\lambda_n^2 |e_n(\lambda)|^2 e^{-2\phi(\lambda)} dm(\lambda)=\sum_n \lambda_n^2.
$$
Hence
$$
\int_\C \|T k_\lambda\|^2 \frac{dm(\lambda)}{\rho^2(\lambda)}\sim\int_\C 
\|TK(\cdot, \lambda)\|^2 e^{-2\phi(\lambda)}
dm(\lambda)=\|T\|^2_{\mathcal{S}^2}.
$$
For $p=\infty$, we have
$$
\sup_\lambda \|Tk_\lambda\|\le \|T\|_{\mathcal{S}^\infty}.
$$
Then (\ref{spineq}) follows by interpolation.
\bigskip

\begin{Theorem}\label{schattencl}
Suppose $H_{\bar g}$ is bounded. Then $H_{\bar g}\in\mathcal{S}^p$ with $p> 2$ 
if and only if 
$g'\rho\in L^p(1/\rho^2)$.  Moreover, $H_{\bar g}$ fails to be Hilbert-Schmidt, 
unless $g$ is constant.
\end{Theorem}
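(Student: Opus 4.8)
The plan is to establish the three assertions in turn: the ``only if'' direction, the Hilbert--Schmidt statement (which is really the endpoint $p=2$ of the former), and the ``if'' direction.

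\emph{Necessity.} Suppose $H_{\bar g}\in\mathcal{S}^p$ with $p\ge 2$ (so $H_{\bar g}$ is compact). Applying inequality (\ref{spineq}) to $T=H_{\bar g}$ gives $\int_\C\|H_{\bar g}k_\lambda\|^p\,dm(\lambda)/\rho^2(\lambda)\lesssim\|H_{\bar g}\|_{\mathcal{S}^p}^p$. I would bound the left side from below exactly as in the proofs of Theorems \ref{boundedness} and \ref{compactness}: from (\ref{kernelaction}), $\|H_{\bar g}k_\lambda\|^2=\int_\C|g(z)-g(\lambda)|^2|k_\lambda(z)|^2e^{-2\phi(z)}\,dm(z)$; restricting to $|z-\lambda|<\alpha\rho(\lambda)$, replacing $|k_\lambda|^2e^{-2\phi}$ by $\rho^{-2}(\lambda)$ via Lemma \ref{close}, and using the Cauchy estimate together with the subharmonicity of $|g|$ yields $\|H_{\bar g}k_\lambda\|^2\gtrsim\rho^2(\lambda)|g'(\lambda)|^2$. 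Substituting this in gives $\int_\C|g'(\lambda)|^p\rho^{p-2}(\lambda)\,dm(\lambda)\lesssim\|H_{\bar g}\|_{\mathcal{S}^p}^p<\infty$, i.e.\ $g'\rho\in L^p(1/\rho^2)$. For the Hilbert--Schmidt part, $H_{\bar g}$ bounded forces $g$ to be a polynomial (the Remark after Theorem \ref{boundedness}); if moreover $H_{\bar g}\in\mathcal{S}^2$, then the case $p=2$ of the above — where (\ref{spineq}) holds with equality up to constants — yields $\int_\C|g'|^2\,dm<\infty$, and since the derivative of a nonconstant polynomial is never square integrable over $\C$, $g$ is constant and $H_{\bar g}=0$.

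\emph{Sufficiency.} Assume $g'\rho\in L^p(1/\rho^2)$, $p>2$. As established in the proof of Theorem \ref{boundedness} (via Theorem \ref{hormander} and Proposition \ref{regularization}) we have $\|H_{\bar g}f\|^2\le C\int_\C|f|^2|g'|^2\rho^2e^{-2\phi}\,dm=C\|M_{g'\rho}f\|^2$ for $f\in\operatorname{Span}\{k_\lambda\}$, where $M_{g'\rho}\colon\mathcal{F}^2_\phi\to L^2(e^{-2\phi})$ is the bounded multiplication $f\mapsto g'\rho f$. Hence $H_{\bar g}^*H_{\bar g}\le C\,M_{g'\rho}^*M_{g'\rho}$, and by the standard comparison of singular numbers of ordered positive operators, $\|H_{\bar g}\|_{\mathcal{S}^p}\lesssim\|M_{g'\rho}\|_{\mathcal{S}^p}$, so it suffices to show $M_{g'\rho}\in\mathcal{S}^p$. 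Now $M_{g'\rho}^*M_{g'\rho}=T_w$ is the positive Toeplitz operator on $\mathcal{F}^2_\phi$ with symbol $w=|g'|^2\rho^2$, and $M_{g'\rho}\in\mathcal{S}^p\iff T_w\in\mathcal{S}^{p/2}$, with $p/2>1$. I would then invoke (proving it if necessary by a Luecking-type localization with a fixed $\rho$-lattice, a subordinate partition of unity, and the kernel estimates of Theorem \ref{kernelestimate} and Lemma \ref{close}) the Schatten characterization of positive Toeplitz operators on $\mathcal{F}^2_\phi$: for $q>1$, $T_w\in\mathcal{S}^q$ iff its Berezin transform $\widetilde{T_w}(\lambda)=\langle T_wk_\lambda,k_\lambda\rangle=\int_\C w|k_\lambda|^2e^{-2\phi}\,dm$ lies in $L^q(dm/\rho^2)$. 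Finally, $\widetilde{T_w}(\lambda)=\|M_{g'\rho}k_\lambda\|^2\sim|g'(\lambda)|^2\rho^2(\lambda)$: the lower bound is as in the necessity part, and for the upper bound one splits the integral at $|z-\lambda|\sim\rho(\lambda)$, uses Lemma \ref{close} and the slow variation of $|g'|$ and $\rho$ on $D(\lambda)$ near the diagonal, and uses Theorem \ref{kernelestimate}, the bound (\ref{decayrho}) on $\rho$, the polynomial growth of $|g'|$, and Lemma \ref{estimate} to absorb the off-diagonal tail. This gives $T_w\in\mathcal{S}^{p/2}\iff\int_\C(|g'|^2\rho^2)^{p/2}\,dm/\rho^2=\int_\C|g'|^p\rho^{p-2}\,dm<\infty$, the hypothesis.

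The crux is the sufficiency, and within it the Schatten characterization of positive Toeplitz operators together with the two-sided comparison $\widetilde{T_w}(\lambda)\sim|g'(\lambda)|^2\rho^2(\lambda)$; the upper half of this comparison is the delicate point, since $|g'|$ may grow polynomially while $K$ decays off-diagonal only like $e^{-d_\phi^\varepsilon}$, so one must carefully balance the polynomial factors against this decay using Lemma \ref{estimate} and treat separately the bounded region where $g'$ can vanish (there the pointwise comparison may fail, but it still contributes only a finite amount). An equivalent route, parallel to the treatment of $N$ in the last section, is to apply Russo's theorem to $H_{\bar g}$ directly: its kernel $K(z,w)(\overline{g(z)}-\overline{g(w)})$ has modulus symmetric in $z,w$, so only one mixed $(L^p,L^{p'})$-norm of the kernel enters Russo's bound, and the same near/far splitting and Lemma \ref{estimate} show this norm is $\lesssim\bigl(\int_\C|g'|^p\rho^{p-2}\,dm\bigr)^{1/p}$ up to a finite additive constant, which again yields $H_{\bar g}\in\mathcal{S}^p$.
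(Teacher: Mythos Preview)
Your necessity argument and the Hilbert--Schmidt endpoint are exactly the paper's: apply (\ref{spineq}) to $H_{\bar g}$, then use (\ref{kernelaction}), Lemma \ref{close}, and the Cauchy/subharmonicity estimate to get $\|H_{\bar g}k_\lambda\|^2\gtrsim|g'(\lambda)|^2\rho^2(\lambda)$.

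For sufficiency you diverge. After reducing to $T_w\in\mathcal{S}^{p/2}$ with $w=|g'|^2\rho^2$ (same as the paper), you propose either a full Berezin-transform Schatten characterization of positive Toeplitz operators on $\mathcal{F}^2_\phi$ together with the two-sided comparison $\widetilde{T_w}(\lambda)\sim|g'(\lambda)|^2\rho^2(\lambda)$, or a direct application of Russo's theorem to the kernel $K(z,w)(\overline{g(z)}-\overline{g(w)})$. Both routes are viable, but they are considerably heavier than what the paper does: since $T_w$ is positive and compact, its eigenvalues satisfy $\lambda_n=\langle T_w e_n,e_n\rangle=\int_\C w\,|e_n|^2e^{-2\phi}\,dm$ for an orthonormal basis $(e_n)$ of eigenvectors; as $|e_n|^2e^{-2\phi}\,dm$ is a probability measure, Jensen's inequality gives $\lambda_n^{p/2}\le\int_\C w^{p/2}|e_n|^2e^{-2\phi}\,dm$, and summing over $n$ with $\sum_n|e_n|^2=K(z,z)\sim e^{2\phi}/\rho^2$ yields $\sum_n\lambda_n^{p/2}\lesssim\int_\C w^{p/2}\rho^{-2}\,dm$, which is the hypothesis. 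This three-line argument bypasses both the Luecking-type localization and the delicate upper bound $\widetilde{T_w}(\lambda)\lesssim|g'(\lambda)|^2\rho^2(\lambda)$ that you correctly flag as the crux of your approach (and which indeed fails pointwise near the zeros of $g'$ and requires separate bookkeeping). Your arguments are not wrong, and the Russo route has the merit of being uniform with Section~4, but the paper's Jensen shortcut is much more economical.
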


\begin{proof}
Suppose $H_{\bar g}\in \mathcal{S}^p$ with $p\ge 2$. Then by (\ref{spineq})  and
using arguments similar to those above we
have
\begin{eqnarray*}
\infty>\int_\C \|H_{\bar g} k_\lambda \|^p
\frac{dm(\lambda)}{\rho^2(\lambda)}&=&\int_\C \Bigl(\int_\C |g(z)-g(\lambda)|^2
|k_\lambda(z)|^2 e^{-2\phi(z)}\,dm(z)\Bigr)^{p/2}
 \frac{dm(\lambda)}{\rho^2(\lambda)}\\
&\gtrsim& 
\int_\C \Bigl(\int_{|z-\lambda|<\alpha \rho(\lambda)} |g(z)-g(\lambda)|^2
|k_\lambda(z)|^2 e^{-2\phi(z)}\,dm(z)\Bigr)^{p/2}
 \frac{dm(\lambda)}{\rho^2(\lambda)}\\
&\gtrsim& \int_\C |g'(\lambda)\rho(\lambda)|^p 
\frac{dm(\lambda)}{\rho^2(\lambda)},
\end{eqnarray*}
for $\alpha$ small enough.
With this the necessity is proven.
In particular,  the above relation shows that $H_{\bar g}$ cannot be 
Hilbert-Schmidt for nonconstant 
anti-analytic symbols.

To prove the sufficiency, assume $g'\rho\in L^p(1/\rho^2)$. Then a
subharmonicity argument
shows that $|g'(\lambda)|\rho(\lambda)\rightarrow 0$ as
$|\lambda|\rightarrow\infty$. 
As in the proof  of Theorem \ref{compactness} we have
$$
\|H_{\bar g}f\|\lesssim \|M_{g'\rho}f\|,\quad f\in \mathcal{F}^2_\phi.
$$  
Therefore $M_{g'\rho}\in \mathcal{S}^p$ for some $p>2$, implies  $H_{\bar g}\in
\mathcal{S}^p$. Indeed, this follows from the criterion (see \cite{gohberg}):  
A linear operator $S:H_1\rightarrow H_2$, where $H_1,H_2$
are separable Hilbert spaces, belongs to ${\mathcal{S}^p},  p\ge 2$, if and only
if 
$\sum \|S e_n\|^p<\infty$, for any orthonormal basis $\{e_n\}$ of $H_1$.   
 We notice that for $f,h\in \mathcal{F}^2_\phi$ we have
\begin{equation*}
\langle M_{g'\rho}^*M_{g'\rho} f , h \rangle=\langle M_{g'\rho} f ,  M_{g'\rho}
h \rangle= \int_\C  f \bar h |g'|^2\rho^2 e^{-2\phi} \, dm=
\langle T_{|g'|^2 \rho^2} f, h\rangle,
\end{equation*}   
where $T_{|g'|^2 \rho^2}$ is the Toeplitz operator on  $\mathcal{F}^2_\phi$ with
symbol $|g'|^2\rho^2$.
In order to show that $M_{b'\rho}\in\mathcal{S}^p$,  we are going to prove that 
$T_{|g'|^2\rho^2}=M_{g'\rho}^*M_{g'\rho}
\in \mathcal{S}^{p/2}$. 
Since  $|g'(\lambda)|\rho(\lambda)\rightarrow 0$ as
$|\lambda|\rightarrow\infty$, the proof of the sufficiency in Theorem 
\ref{compactness} shows that $M_{g'\rho}$ is compact, and hence  $T_{|g'|^2
\rho^2}$ is compact.
Denote $G=|g'|^2\rho^2$ for convenience.  The operator $T_G$  is also positive
and self-adjoint, and
it is then given by
\begin{equation*}
T_G=\sum_n \lambda_n \langle \cdot, e_n\rangle e_n,
\end{equation*}
where $\lambda_n$ are the singular numbers of $T_G$, and $e_n$ is  an
orthonormal basis in $\mathcal{F}^2_\phi$.  
Then 
$$
\lambda_n=\langle T_G e_n, e_n\rangle= \int_\C |e_n|^2 G e^{-2\phi} \, dm,
$$
and by Jensen's inequality we get
$$
\lambda_n^{p/2}\le  \int_\C  G^{p/2} |e_n|^2 e^{-2\phi} \, dm
$$
using the fact that $|e_n|^2 e^{-2\phi} \, dm$ is a probability measure on $\C$.
Taking into account the fact that $K(z,\zeta)=\sum e_n(z)
\overline{e_n(\zeta)}$,  we can sum up over $n$ in 
the previous relation to deduce
\begin{eqnarray*}
\sum_n \lambda_n^{p/2}&\le& \sum_n  \int_\C  G^{p/2} |e_n|^2 e^{-2\phi} \, dm\\
&=&  \int_\C  G(z)^{p/2} K(z,z)e^{-2\phi(z)} \, dm(z)\\
&\lesssim& \int_\C  G(z)^{p/2} \frac{1}{\rho^2(z)} \, dm(z)<\infty, 
\end{eqnarray*}
by our assumption. Thus $T_G\in \mathcal{S}^{p/2}$, and consequently  $H_{\bar
g}\in \mathcal{S}^p$.
\end{proof}

\section {The canonical solution to $\bar\partial$ on $L^2(e^{-2\phi})$}

\noindent For $g=z$ in Theorem \ref{schattencl} we obtain that the restriction
of the canonical solution operator $N$ to $\bar\partial$ to the generalized Fock
space $\mathcal{F}^2_\phi$ is  never Hilbert-Schmidt and it belongs to
$\mathcal{S}^p$ for $p>2$ if and only if 
\begin{equation}\label{schattenco}
\int_\C \rho^{p-2}(z)\, dm(z)<\infty.
\end{equation}
The aim of this section is to show that the condition  above is sufficient for
$N$ to belong to $\mathcal{S}^p$, even when 
defined on the whole of $L^2(e^{-2\phi})$.

\noindent For the integral kernel $C(z,\zeta)$ of $N$, i.e.
$$
Nf(z)=\int_\C e^{\phi(z)-\phi(\zeta)} C(z,\zeta) f(\zeta)\, dm(\zeta),  \quad
f\in L^2(e^{-2\phi}),
$$  
the following estimates were obtained in \cite{jj}
\begin{Theorem}\label{estimate1}\cite{jj}
There exists $\varepsilon>0$ such that
$$
|C(z,\zeta)|\lesssim
\begin{cases}
|z-\zeta|^{-1},& |z-\zeta|\le\rho(z),\\
\rho^{-1}(z)\exp(-d_\phi(z,\zeta)^\varepsilon),& |z-\zeta|\ge\rho(z).
\end{cases}
$$
\end{Theorem}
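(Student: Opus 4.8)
The plan is to identify $C$ with a normalized canonical solution of a point--source equation and then to estimate that solution by a cutoff construction. Writing $N$ as an integral operator against $dm(\zeta)$, its Schwartz kernel is $e^{\phi(z)-\phi(\zeta)}C(z,\zeta)$; the two requirements $\bar\partial Nf=f$ and $Nf\perp\mathcal F^2_\phi$ say that, for each fixed $\zeta$, the function $V_\zeta(z):=e^{\phi(z)-\phi(\zeta)}C(z,\zeta)$ solves $\bar\partial_z V_\zeta=\delta_\zeta$ and is orthogonal to $\mathcal F^2_\phi$, i.e. it is the canonical (minimal--norm) solution of the point--source equation. Since $\phi$ varies by a bounded amount on $D(\zeta)$ (a standard consequence of $\Delta\phi\sim\rho^{-2}$ and Proposition \ref{regularization}), the asserted estimates are equivalent to $|V_\zeta(z)|\lesssim|z-\zeta|^{-1}$ for $|z-\zeta|\le\rho(z)$ and $|V_\zeta(z)|\lesssim e^{\phi(z)-\phi(\zeta)}\rho^{-1}(z)\exp(-d_\phi(z,\zeta)^\varepsilon)$ otherwise.

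To build $V_\zeta$ I fix a smooth cutoff $\chi$ equal to $1$ on $\{|w|\le 1/2\}$ and vanishing off $\{|w|\le 1\}$, and set $u_\zeta(z)=\chi((z-\zeta)/\rho(\zeta))/(\pi(z-\zeta))$. This is a local solution supported in $D(\zeta)$ with $\bar\partial_z u_\zeta=\delta_\zeta+h_\zeta$, where $h_\zeta$ is smooth, supported in $\{\rho(\zeta)/2\le|z-\zeta|\le\rho(\zeta)\}$, and $|h_\zeta|\lesssim\rho^{-2}(\zeta)$. Removing $h_\zeta$ by a solution $w_\zeta$ of $\bar\partial w_\zeta=h_\zeta$ and projecting out the holomorphic part yields $V_\zeta=(I-P)(u_\zeta-w_\zeta)=u_\zeta-w_\zeta-Pu_\zeta+Pw_\zeta$; this is legitimate even though $u_\zeta\notin L^2(e^{-2\phi})$, since $|w-\zeta|^{-1}$ is locally integrable, so $Pu_\zeta$ is well defined and, by Lemma \ref{close}, $\|Pu_\zeta\|\lesssim e^{-\phi(\zeta)}$. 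In the near--diagonal regime $|z-\zeta|\le\rho(z)$ the singular term dominates, $|u_\zeta(z)|\lesssim|z-\zeta|^{-1}$, while the remaining three terms are holomorphic near $\zeta$ with norm $\lesssim e^{-\phi(\zeta)}$, so the sub--mean value inequality and (\ref{diagonalestimate}) bound each by $\rho^{-1}(\zeta)\lesssim|z-\zeta|^{-1}$.

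In the off--diagonal regime $|z-\zeta|\ge\rho(z)$ one has $u_\zeta(z)=0$, so only the projection and correction terms survive and the exponential decay must be produced. The term $Pu_\zeta(z)=\int_{D(\zeta)}K(z,w)u_\zeta(w)e^{-2\phi(w)}dm(w)$ is immediate from Theorem \ref{kernelestimate}: since $d_\phi(z,w)\sim d_\phi(z,\zeta)$ for $w\in D(\zeta)$ and $\int_{D(\zeta)}|w-\zeta|^{-1}dm(w)\sim\rho(\zeta)$, it obeys $|Pu_\zeta(z)|\lesssim e^{\phi(z)-\phi(\zeta)}\rho^{-1}(z)\exp(-d_\phi(z,\zeta)^\varepsilon)$.

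The main obstacle is the correction $w_\zeta$, for which Hörmander's $L^2$ bound alone gives no decay. I would therefore solve $\bar\partial w_\zeta=h_\zeta$ by Theorem \ref{hormander} against the perturbed weight $\phi-\psi$, where $\psi$ is subharmonic, comparable to $d_\phi(\cdot,\zeta)^\varepsilon$, and has $\Delta\psi\ll\Delta\phi$ (so $\Delta(\phi-\psi)\sim\rho^{-2}$ and the regularized radius is unchanged); such a gauge is built as in \cite{christ,mmoc} and is precisely the one rendered integrable by Lemma \ref{estimate}. As $\psi$ is bounded on $\mathrm{supp}\,h_\zeta$, this gives $\int|w_\zeta|^2 e^{-2\phi}e^{2\psi}dm\lesssim e^{-2\phi(\zeta)}$, and since $w_\zeta$ is holomorphic off $D(\zeta)$ the weighted sub--mean value inequality upgrades it to $|w_\zeta(z)|\lesssim e^{\phi(z)-\phi(\zeta)}\rho^{-1}(z)\exp(-d_\phi(z,\zeta)^\varepsilon)$. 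Feeding this and Theorem \ref{kernelestimate} into $Pw_\zeta(z)=\int K(z,w)w_\zeta(w)e^{-2\phi(w)}dm(w)$ and using the quasi--triangle inequality $d_\phi(z,w)^\varepsilon+d_\phi(w,\zeta)^\varepsilon\gtrsim d_\phi(z,\zeta)^{\varepsilon'}$ to extract a factor $\exp(-d_\phi(z,\zeta)^{\varepsilon'}/2)$ (the residual integral being finite by Lemma \ref{estimate}) yields the same bound for $Pw_\zeta$. Collecting the four contributions proves the theorem; the delicate points are the construction of the weight $\psi$ with controlled Laplacian and the passage from weighted $L^2$ to pointwise decay.
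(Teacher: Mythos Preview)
The paper does not prove this theorem at all: it is quoted verbatim from \cite{jj} (Marzo--Ortega-Cerd\`a) and used as a black box in the proof of Theorem~\ref{maintheorem}. There is therefore no proof in the present paper to compare your proposal against.

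For what it is worth, the strategy you sketch is essentially the one carried out in \cite{jj}, which in turn follows Christ \cite{christ}: realize the kernel as (a normalization of) the canonical solution of a point-mass $\bar\partial$-problem, peel off the Cauchy singularity by a cutoff supported in $D(\zeta)$, and manufacture the off-diagonal exponential decay by solving the smooth correction equation via H\"ormander's estimate against a perturbed weight $\phi-\psi$ with $\psi\sim d_\phi(\cdot,\zeta)^\varepsilon$ and $\Delta\psi\ll\Delta\phi$. Your identification of the two delicate points---building $\psi$ with controlled Laplacian and upgrading the weighted $L^2$ bound to a pointwise one via sub-mean-value on discs $D(z)$---is accurate, and both are handled in \cite{jj,christ,mmoc}.

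One technical caution in your write-up: the formula $V_\zeta=(I-P)(u_\zeta-w_\zeta)$ is formal, since $u_\zeta\notin L^2(e^{-2\phi})$ (the singularity $|z-\zeta|^{-2}$ is not locally integrable in the plane), and $V_\zeta$ itself is not in $L^2$ either. Saying that $Pu_\zeta$ is well defined because $|w-\zeta|^{-1}$ is locally $L^1$ is correct, but it does not by itself justify that the resulting $V_\zeta$ coincides with the Schwartz kernel of $N$; the orthogonality $V_\zeta\perp\mathcal F^2_\phi$ has to be interpreted weakly, and the identification with $e^{\phi(z)-\phi(\zeta)}C(z,\zeta)$ requires an approximation or duality argument. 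In \cite{jj} this bookkeeping is done by working directly with the operator $N$ and its adjoint rather than through a distributional point-source formulation, but the two routes are equivalent once this point is made precise.
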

\noindent To prove our main result we use these estimates together with a
criterion for an integral operator to belong to Schatten classes for $p\ge 2$
obtained in \cite{russo}. Given a measure space $(X,\mu)$, let $G(x,y)$ be a
complex-valued measurable function on $X\times X$ and denote
$G^*(x,y)=\overline{G(y,x)}$.  Consider the mixed normed space 
$$
L^p(L^q)=\Bigl\{G:\   \int \Bigl(\int |G(x,y)|^q  d\mu(y)\Bigr)^{p/q} d\mu
(x)<\infty\Bigr\}
$$
\begin{Theorem}\label{sufficientc} \cite{russo}
Let $p\ge 2$ and let $(X,\mu)$ be as above. If  $G,G^*\in L^p(L^{p'})$, where $1
/p +1 /p' = 1$, then
the integral operator with kernel $G(x,y)$ given by
$$
T f (x)=\int G(x,y) f(y) d\mu(y), \quad f\in L^2(d\mu),
$$  
belongs to ${\mathcal S}^p$.
\end{Theorem}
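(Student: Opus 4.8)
The plan is to obtain Theorem~\ref{sufficientc} by complex interpolation between the two classical endpoints $p=2$ and $p=\infty$, realised through an explicit Stein analytic family of integral operators. Recall the two facts that make the endpoints elementary: an integral operator with kernel $G$ is Hilbert--Schmidt with $\|T\|_{\mathcal S^2}=\big(\iint|G|^2\,d\mu\,d\mu\big)^{1/2}$, which is precisely the $L^2(L^2)$ norm of $G$; and, by the Schur test, $T$ is bounded on $L^2(d\mu)$ with
\[
\|T\|_{\mathcal S^\infty}\le\Big(\sup_x\!\int|G(x,y)|\,d\mu(y)\Big)^{1/2}\Big(\sup_y\!\int|G(x,y)|\,d\mu(x)\Big)^{1/2},
\]
the two factors being the $L^\infty(L^1)$ norms of $G$ and of $G^*$. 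Since $[\mathcal S^2,\mathcal S^\infty]_\theta=\mathcal S^p$ for $1/p=(1-\theta)/2$, i.e. $\theta=1-2/p$, it suffices to interpolate these two estimates.

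First I would reduce, by density, to kernels $G$ that are bounded and supported on a set of finite $\mu\times\mu$ measure, and normalise $\|G\|_{L^p(L^{p'})}=\|G^*\|_{L^p(L^{p'})}=1$. Writing $F=|G|$ and introducing the marginal profiles $a(x)=\big(\int F^{p'}\,d\mu(y)\big)^{1/p'}$ and $b(y)=\big(\int F^{p'}\,d\mu(x)\big)^{1/p'}$, the normalisation reads $\|a\|_{L^p}=\|b\|_{L^p}=1$. The heart of the argument is to factor $F=g_0^{\,1-\theta}g_1^{\,\theta}$ with $g_0\in L^2(L^2)$ and $g_1$ in the Schur endpoint space $\mathcal K=\{H:H,H^*\in L^\infty(L^1)\}$, both with norm $\lesssim1$, and then to set
\[
\mathcal F(z)=\frac{G}{|G|}\,g_0^{\,1-z}g_1^{\,z},\qquad 0\le\operatorname{Re} z\le1,
\]
so that $\mathcal F(\theta)=G$, $|\mathcal F(it)|=g_0$ and $|\mathcal F(1+it)|=g_1$. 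The associated family of integral operators $\Phi(z)=T_{\mathcal F(z)}$ is then analytic and admissible, lies in $\mathcal S^2$ on $\operatorname{Re} z=0$ and in $\mathcal S^\infty$ on $\operatorname{Re} z=1$ with uniformly bounded norms, and Stein interpolation yields $T_G=\Phi(\theta)\in\mathcal S^p$.

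The crux, and the step I expect to be the main obstacle, is choosing $g_1$ so that \emph{both} Schur marginals of $g_1$ are finite simultaneously: a pure power $F^{p'}a^{c}b^{d}$ cannot achieve this, since controlling the $y$-marginal forces $d=0$ while controlling the $x$-marginal forces $c=0$. The resolution I would use is the symmetric choice
\[
g_1=\frac{F^{p'}}{\max\!\big(a(x)^{p'},\,b(y)^{p'}\big)},
\]
for which $\int g_1\,d\mu(y)\le a(x)^{-p'}\int F^{p'}\,d\mu(y)=1$ and, symmetrically, $\int g_1\,d\mu(x)\le1$, so that $g_1\in\mathcal K$ with norm $\le1$. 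The companion factor $g_0=(F/g_1^{\,\theta})^{1/(1-\theta)}$ is then forced, and since $1-\theta=2/p$ one finds, after using $pp'=p+p'$, that $\|g_0\|_{L^2(L^2)}^2=\iint F^{p}g_1^{\,2-p}\,d\mu\,d\mu=\iint F^{p'}\max(a^{p'},b^{p'})^{p-2}\,d\mu\,d\mu$. Bounding $\max(a^{p'},b^{p'})^{p-2}\lesssim a^{p'(p-2)}+b^{p'(p-2)}$ and integrating out one variable at a time (via $\int F^{p'}\,d\mu(y)=a^{p'}$, $\int F^{p'}\,d\mu(x)=b^{p'}$ and $p'(p-1)=p$) gives $\|g_0\|_{L^2(L^2)}^2\lesssim\|a\|_{L^p}^p+\|b\|_{L^p}^p\lesssim1$. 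This is exactly where both hypotheses $G,G^*\in L^p(L^{p'})$ are consumed.

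The remaining points should be routine. The reduction to bounded, finitely supported $G$ keeps $g_0,g_1$ bounded and bounded below on $\operatorname{supp}G$, so that $z\mapsto\Phi(z)$ is a genuine admissible analytic family of Hilbert--Schmidt operators on $L^2(d\mu)$. One then quotes $[\mathcal S^2,\mathcal S^\infty]_\theta=\mathcal S^p$ and runs the standard three-lines argument in the Schatten scale, applied to $z\mapsto\operatorname{tr}\big(\Phi(z)B(z)\big)$ for a suitable $\mathcal S^{p'}$-normalised analytic family $B(z)$, to conclude $\|T_G\|_{\mathcal S^p}\lesssim\|G\|_{L^p(L^{p'})}^{1-\theta}\,\big(\max(\|G\|_{L^p(L^{p'})},\|G^*\|_{L^p(L^{p'})})\big)^{\theta}$, and finally remove the normalisation and the density reduction.
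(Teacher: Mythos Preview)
The paper does not give its own proof of Theorem~\ref{sufficientc}; the result is simply quoted from Russo \cite{russo} (with later refinements noted in \cite{wolff,peetre}), so there is no ``paper proof'' to compare against.

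That said, your interpolation argument is sound and is in fact the classical route to this theorem. The endpoints are identified correctly: at $p=2$ the $\mathcal S^2$ norm of $T_G$ is exactly $\|G\|_{L^2(L^2)}$, and at $p=\infty$ the Schur test bounds $\|T_G\|$ by the geometric mean of the $L^\infty(L^1)$ norms of $G$ and $G^*$. The analytic family $\mathcal F(z)=\tfrac{G}{|G|}\,g_0^{\,1-z}g_1^{\,z}$ is set up correctly, and the symmetric choice $g_1=F^{p'}/\max\bigl(a(x)^{p'},b(y)^{p'}\bigr)$ is precisely the device that makes both Schur marginals $\le 1$ at once. Your computation of $\|g_0\|_{L^2(L^2)}^2$ is right: the exponent identity $p+p'(2-p)=p'$ reduces the integrand to $F^{p'}\max(a^{p'},b^{p'})^{p-2}$, and after splitting the maximum and using $p'(p-1)=p$ one lands on $\|a\|_{L^p}^p+\|b\|_{L^p}^p\le 2$. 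This is indeed where both hypotheses $G,G^*\in L^p(L^{p'})$ are used.

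One small remark on the density step you flag as routine: the approximation of $G$ by bounded kernels of finite support must be arranged so that the approximants are Cauchy in \emph{both} mixed norms simultaneously (truncation by $\mathbf 1_{|G|\le n}\mathbf 1_{E_n\times E_n}$ with $\mu(E_n)<\infty$ does this via dominated convergence, since $p,p'<\infty$), and the passage to the limit in $\mathcal S^p$ then uses the a~priori estimate for the differences $G_n-G_m$ together with an identification of the limit operator with $T_G$. This is standard, but worth spelling out once.
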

\noindent A first version of the above theorem  was proven in \cite{russo} (see
also \cite{wolff}) and
subsequently improved in \cite{peetre}, where sharper conditions on the kernel 
$G$ were given. 

\begin{Theorem}\label{maintheorem}
The operator $N$ is never Hilbert-Schmidt. For $p>2$, $N$ belongs
to the Schatten class $\mathcal{S}^p$ if and only if (\ref{schattenco}) holds.
\end{Theorem}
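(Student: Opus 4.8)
The negative statements require no new work. Since the Schatten classes form an ideal, if $N\in\mathcal{S}^p$ then so does its composition with the bounded inclusion $\mathcal{F}^2_\phi\hookrightarrow L^2(e^{-2\phi})$; but that composition is precisely the Hankel operator $H_{\bar z}=N|_{\mathcal{F}^2_\phi}$ identified in Section~3. Applying Theorem~\ref{schattencl} with $g(z)=z$ (so that $g'\rho=\rho$) then shows that $N$ cannot be Hilbert--Schmidt, and that $N\in\mathcal{S}^p$ with $p>2$ forces $\rho\in L^p(1/\rho^2)$, i.e.\ (\ref{schattenco}).

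For the converse I would use Russo's criterion. First conjugate by the unitary $U\colon L^2(e^{-2\phi})\to L^2(dm)$, $Uf=e^{-\phi}f$, which transforms $N$ into the integral operator on $L^2(dm)$ whose kernel is exactly $C(z,\zeta)$; thus, by Theorem~\ref{sufficientc} applied with $X=\C$ and $d\mu=dm$, it is enough to show that $C$ and $C^*(z,\zeta)=\overline{C(\zeta,z)}$ both belong to $L^p(L^{p'})$, where $p'=p/(p-1)<2$. Everything then reduces to the single pointwise estimate $\int_\C|C(z,\zeta)|^{p'}\,dm(\zeta)\lesssim\rho(z)^{2-p'}$, together with the same bound for $C(\zeta,z)$ in place of $C(z,\zeta)$: once these are known, raising to the power $p/p'=p-1$ and integrating in $z$ yields $\int_\C\rho(z)^{(2-p')(p-1)}\,dm(z)=\int_\C\rho(z)^{p-2}\,dm(z)$, which is finite by hypothesis.

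To obtain the pointwise estimate for $C(z,\zeta)$ I would split $\C$ according to Theorem~\ref{estimate1}. On $\{|z-\zeta|\le\rho(z)\}$ the bound $|C(z,\zeta)|\lesssim|z-\zeta|^{-1}$ gives an integral $\lesssim\int_0^{\rho(z)}r^{1-p'}\,dr\sim\rho(z)^{2-p'}$, the integrability at the origin being exactly the point where $p>2$ is used. On $\{|z-\zeta|>\rho(z)\}$ one invokes the lower bound $d_\phi(z,\zeta)\gtrsim(|z-\zeta|/\rho(z))^{\delta}$ from Lemma~\ref{distance2} and the change of variables $w=(\zeta-z)/\rho(z)$ to reduce the Gaussian-type tail $\rho(z)^{-p'}\int e^{-p'd_\phi(z,\zeta)^{\varepsilon}}\,dm(\zeta)$ to $\rho(z)^{2-p'}$ times a convergent integral in $w$.

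The genuinely delicate point — and what I expect to be the main obstacle — is the estimate for $C(\zeta,z)$, because there the offending weight $\rho(\zeta)^{-1}$ from Theorem~\ref{estimate1} sits at the integration variable rather than at the fixed point $z$. The region where $|z-\zeta|\lesssim\rho(\zeta)$ can be handled by Lemma~\ref{comparable}, which there forces $\rho(\zeta)\sim\rho(z)$ and reduces it to the previous case. On the complementary far region, where $z\notin D(\zeta)$, Lemma~\ref{distance1} gives $\rho(\zeta)\lesssim\rho(z)^{\delta}|z-\zeta|^{1-\delta}$; substituting this for the positive power in $\rho(\zeta)^{-p'}=\rho(\zeta)^{-2}\rho(\zeta)^{2-p'}$ and absorbing the resulting polynomial factor $|z-\zeta|^{(1-\delta)(2-p')}$ against $e^{-d_\phi^{\varepsilon}}$ by means of Lemma~\ref{estimate} (used with $d\mu\sim\rho^{-2}\,dm$ from Proposition~\ref{regularization} and the symmetry of $d_\phi$) produces $\rho(z)^{2-p'}$ once more. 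The bookkeeping here — keeping straight which of $\rho(z)$, $\rho(\zeta)$, $|z-\zeta|$ controls which factor, and choosing the exponents so that the leftover power of $|z-\zeta|$ is nonnegative — is the place where care is needed; combining the two pointwise bounds with the integration described above then finishes the proof.
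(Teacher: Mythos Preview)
Your proposal is correct and follows essentially the same route as the paper: necessity via the restriction $N|_{\mathcal{F}^2_\phi}=H_{\bar z}$ and Theorem~\ref{schattencl}, sufficiency via the unitary conjugation to $L^2(dm)$ and Russo's criterion, with the kernel estimate of Theorem~\ref{estimate1} split into near and far regions and the far region handled through Lemmas~\ref{comparable}--\ref{distance2}, Proposition~\ref{regularization}, and Lemma~\ref{estimate}. The only cosmetic difference is that for the far region of $C$ you use the direct change of variables $w=(\zeta-z)/\rho(z)$, whereas the paper converts $\rho(z)^{-p'}$ to $\rho(z)^{2-p'}\rho(\zeta)^{-2}$ (via Lemmas~\ref{distance1}--\ref{distance2}) and then invokes Lemma~\ref{estimate} with $k=0$; both arguments yield the same bound $\rho(z)^{2-p'}$.
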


\begin{proof} The necessity follows from Theorem \ref{schattencl}.
It remains to prove the sufficiency.   Assume $\rho$ satisfies
(\ref{schattenco}) for some $p>2$.  In order to
prove that $N\in \mathcal{S}^p$, we want  apply Theorem \ref{sufficientc}.  To
this end consider the
unitary operator $U:L^2 \rightarrow L^2(e^{-2\phi})$ given by 
$$
Uf=fe^{\phi}.
$$
Then $N\in \mathcal{S}^p$ if and only if $U^*NU\in\mathcal{S}^p$. Notice that 
$$
U^*NU f(z)=\int_\C C(z,\zeta) f(\zeta)\, dm(\zeta), \quad f\in L^2.
$$
Now it is enough to show that the kernel $C(z,\zeta)$ of $U^*NU$ satisfies
the
conditions in Theorem  \ref{sufficientc},  and then the conclusion will easily
follow.
We shall first estimate 
\begin{eqnarray}\label{term1}
\|C\|^p_{L^p(L^{p'})}= \int_{\C} \Bigl(\int_\C |C(z,\zeta)|^{p'}\, 
dm(\zeta)\Bigr)^{p/p'}\, dm(z).
\end{eqnarray}
Theorem \ref{estimate1} implies
\begin{eqnarray}\label{r1}
\int_{\C} |C(z,\zeta)|^{p'}\, dm(\zeta)&\lesssim&  \int_{|z-\zeta |\le
\rho(z)} \frac{dm(\zeta)}{|z-\zeta|^{p'}} +
\int_{|z-\zeta|> \rho(z)} \frac{dm(\zeta)}{\rho(z)^{p'} \exp
(p'd_\phi^{\varepsilon}(z,\zeta))}\nonumber\\
&\lesssim& \rho(z)^{2-p'}+
\int_{|z-\zeta|> \rho(z)} \frac{dm(\zeta)}{\rho(z)^{p'} \exp
d_\phi^{\varepsilon_1}(z,\zeta)},
\end{eqnarray}
for $0<\varepsilon_1<\varepsilon$. Now for $|z-\zeta| \le \rho(z)$ or
$|z-\zeta|\le \rho(\zeta)$ we have
$\rho(z)\sim\rho(\zeta)$ by Lemma \ref{comparable}.  On the other hand, for
$(z,\zeta)\in\{|z-\zeta|> \rho(z)\}\cap\{|z-\zeta|> \rho(\zeta)\}$, Lemmas
\ref{distance1}-\ref{distance2} imply
$$
\frac{\rho(\zeta)^2}{\exp d_\phi^{\varepsilon_1}(z,\zeta)} \lesssim
\frac{\rho(z)^2}{\exp d_\phi^{\varepsilon_2}(z,\zeta)},
$$  
for some $\varepsilon_2>0$.  Using this in (\ref{r1}) we get
\begin{eqnarray}
\int_{\C} |C(z,\zeta)|^{p'}\,
dm(\zeta)&\lesssim&\rho(z)^{2-p'}+\rho(z)^{2-p'}
\int_{|z-\zeta|> \rho(z)} \frac{1}{\exp d_\phi^{\varepsilon_2}(z,\zeta)}
\frac{dm(\zeta)}{\rho^2(\zeta)}\nonumber\\
&\lesssim& \rho(z)^{2-p'},\nonumber
\end{eqnarray}
where the last step above follows by Proposition  \ref{regularization} and 
Lemma \ref{estimate}.
Returning to (\ref{term1}) we obtain
$$
\|C\|^p_{L^p(L^{p'})}= \int_\C \rho(z)^{(2-p')p/p'}  dm(z)=\int_\C \rho(z)^{p-2}
dm(z)<\infty,
$$
by our assumption.
It remains to show that $\|C^*\|_{L^p(L^{p'})}<\infty$.  Although the estimates
are analogous
in this case, we include them for the sake of completeness. We have
\begin{eqnarray}\label{term2}
\|C^*\|^p_{L^p(L^{p'})}=\int_{\C} \Bigl(\int_\C  |C(z,\zeta)|^{p'}\,
dm(z)\Bigr)^{p/p'}\, dm(\zeta).
\end{eqnarray}
As before, by Theorem \ref{estimate1} and Lemma \ref{comparable} we get
\begin{eqnarray}
\int_{\C} |C(z,\zeta)|^{p'}\, dm(z)&\lesssim& \int_{|z-\zeta |\le \rho(z)}
\frac{dm(z)}{|z-\zeta|^{p'}} + \int_{|z-\zeta|> \rho(z)}
\frac{dm(z)}{\rho(z)^{p'}\exp (p'd_\phi^{\varepsilon}(z,\zeta))}\nonumber\\
&\lesssim& \int_{|z-\zeta |\le c\rho(\zeta)} \frac{dm(z)}{|z-\zeta|^{p'}}
+ \int_{|z-\zeta|> \rho(z)} \frac{dm(z)}{\rho(z)^{p'}\exp
(p'd_\phi^{\varepsilon}(z,\zeta))}\nonumber\\ &\lesssim&
\rho(\zeta)^{2-p'}\Bigl(1+ \int_{|z-\zeta|> \rho(z)} \frac{1}{\exp
d_\phi^{\varepsilon_1}(z,\zeta)} \frac{dm(z)}{\rho^2(\zeta)}\Bigr) \nonumber,
\end{eqnarray}
where $c>0$, and the last step above follows by Lemmas  
\ref{distance1}-\ref{distance2}. By 
Proposition \ref{regularization} and Lemma \ref{estimate} we obtain
$$
\int_{\C} |C(z,\zeta)|^{p'}\, dm(z)\lesssim \rho(\zeta)^{2-p'},
$$
and hence by (\ref{term2}) we get
\begin{equation*}
\|C^*\|^p_{L^p(L^{p'})}\lesssim\int_{\C} |\rho(\zeta)|^{p-2}\, dm(\zeta)<\infty.
\end{equation*}
With this the proof is complete.
\end{proof}


\begin{thebibliography}{99}

\bibitem{peetre}
 J. Arazy, S. D. Fisher, S.  Janson, J. Peetre, Membership of Hankel operators
on the ball in unitary ideals,
{\it  J. London Math. Soc.} {\bf 43} (1991),  485-508. 


\bibitem{bergman}
 S. Bergman, {\it The Kernel Function and Conformal Mapping}, 2nd ed., A.M.S.
Survey V, Providence, 1970. 


\bibitem{berger}
 M. S. Berger, {\it Nonlinearity and functional analysis},  Academic Press, New
York-London, 1977.


\bibitem{youssfi}
 H. Bommier-Hato, E. H. Youssfi, Hankel operators on weighted Fock spaces, {\it
Integral Equations Operator Theory} {\bf 59} (2007), 1-17.

\bibitem{christ}
 M. Christ, On the $\overline\partial$ equation in weighted $L\sp 2$ norms in
$C\sp 1$,{\it J. Geom. Anal.\,}{\bf 1}(1991),  193-230.

\bibitem{fustraube} 
S. Fu, E. J. Straube,  Compactness in the $\overline\partial$-Neumann problem,
Complex analysis and geometry (Columbus, OH, 1999), 141-160, Ohio State Univ.
Math. Res. Inst. Publ., 9, de Gruyter, Berlin, 2001. 

\bibitem{gohberg}
I. C. Gohberg and M. G. Krein,  Introduction to the theory of linear
nonselfadjoint operators.  Translations of Mathematical Monographs, Amer. Math.
Soc., Providence, R.I., 1969.

\bibitem{haslinger1} F. Haslinger, The canonical solution operator to
$\overline\partial$ restricted to spaces of entire functions, {\it Ann. Fac.
Sci. Toulouse Math.} (6) {\bf 11} (2002),  57-70.

\bibitem{haslhelffer}F. Haslinger and B. Helffer, 
Compactness of the solution operator to $\overline\partial$ in weighted
$L^2$-spaces,
{\it J. Funct. Anal.}  {\bf 243} (2007), 679-697. 

\bibitem{haslingerlamel} 
F. Haslinger and B. Lamel,  Spectral properties of the canonical solution
operator to $\overline\partial$, {\it J. Funct. Anal.} {\bf 255} (2008), 13-24. 

\bibitem{hoermander}
L.  H\"ormander, $L^{2}$ estimates and existence theorems for the $\bar
\partial $ operator, {\it Acta Math. } {\bf 113} (1965),  89-152.

\bibitem{wolff}
S. Janson, T.  H.  Wolff, 
Schatten classes and commutators of singular integral operators,
{\it Ark. Mat.} {\bf 20} (1982), 301-310. 

\bibitem{schneiderknirsch}
W. Knirsch, G. Schneider, Continuity and Schatten-von Neumann $p$-class
membership of Hankel operators with anti-holomorphic symbols on (generalized)
Fock spaces, {\it J. Math. Anal. Appl.} {\bf 320} (2006), 403-414.

\bibitem{linrochberg}
P. Lin, R. Rochberg,  Hankel operators on the weighted Bergman spaces with
exponential type weights,
{\it Integral Equations Operator Theory} {\bf 21} (1995), 460-483. 

\bibitem{linrochberg1}
P. Lin, R. Rochberg,  Trace ideal criteria for Toeplitz and Hankel operators on
the weighted Bergman spaces with exponential type weights, {\it Pacific J.
Math.} {\bf 173} (1996), 127-146. 

\bibitem{loverayoussfi}
S. Lovera, E. H. Youssfi, 
Spectral properties of the $\overline\partial$-canonical solution operator, 
{\it J. Funct. Anal.} {\bf 208} (2004), 360-376. 

\bibitem{mmoc}
N. Marco, X. Massaneda and J. Ortega-Cerd\`a,  Interpolating and sampling
sequences for entire functions, {\it Geom. Funct. Anal.} {\bf 13} (2003), 
862-914.

\bibitem{jj} J. Marzo and J. Ortega-Cerd\`a, Pointwise estimates for the
Bergmam kernel of the weighted Fock space, {\it J. Geom. Anal.}, 
{\bf 19} (2009), 890-910.

\bibitem{russo}
B. Russo , On the Hausdorff-Young theorem for integral operators, {\it Pacific
J. Math.} {\bf 68} (1977),  241-253.

\bibitem{schneider}
G. Schneider, A note on Schatten-class membership of Hankel operators with
anti-holomorphic symbols on generalized Fock-spaces, {\it Math. Nachr.} {\bf
282} (2009), 99-103. 



\end{thebibliography}
\end{document}